\documentclass[12pt, leqno]{amsart}
\DeclareSymbolFont{rsfs}{U}{rsfs}{m}{n}
\DeclareSymbolFontAlphabet{\rsfsmathscr}{rsfs}
\usepackage[mathscr]{eucal}
\usepackage{color}

\usepackage{amsmath,amscd,amsthm,amssymb}
\usepackage{latexsym, amssymb,  amsmath}

\def\R{{\mathbb{R}}}
\def\Rn{{\mathbb{R}^n}}

\def\i{\infty}
\def\B{{\mathcal{B}}}
\def\ds{\displaystyle}

\def\L{\mathcal{L}}
\def\SS{{\mathbb{S}}}
\def\M{\mathcal{M}}
\def\K{\mathcal{K}}

\def\E{\mathcal{E}}

\def\ba{\mathbf{ a}}
\def\loc{\rm{ loc}}

\def\sm {\setminus}
\def\wtl{\widetilde}
\def\T{\mathcal{T}}
\def\KK{\mathfrak{K}}
\def\tl {\tilde}
\def\CC{\mathfrak{C}}

\def\Lploc{L_p^{\rm loc}(\Rn)}
\def\Lploc+{L_p^{\rm loc}(\R^n_+)}
\def\Lpwloc+{L_{p,w}^{\rm loc}(\R^n_+)}

\def\supp{{\rm supp}}

\newcommand{\ess}{\mathop{\rm ess \; sup}\limits}
\newcommand{\es}{\mathop{\rm ess \; inf}\limits}

\newtheorem{theorem}{Theorem}{}
\newtheorem{corollary}{Corollary}{}
\newtheorem{definition}{Definition}{}
\newtheorem{remark}{Remark}{}
{}
\newtheorem{lemma}{Lemma}{}
{}

\numberwithin{equation}{section}

\makeatother

\begin{document}

\baselineskip=20pt

\title[The Dirichlet problem in weighted spaces]{ The Dirichlet problem in a class of generalized weighted spaces}

\author[V. Guliyev]{Vagif  S. Guliyev}
\address{Ahi Evran University, Department of Mathematics,
 Kirsehir, Turkey}
\email{vagif@guliyev.com}

\author[M. Omarova]{Mehriban Omarova}
\address{ Institute of Mathematics and Mechanics
 of NAS of Azerbaijan, Baku}
\email{ mehriban\_omarova@yahoo.com}

\author[L.G. Softova]{Lubomira G. Softova}
\address{Department of Civil Engineering, Design, Construction and Environment,
Second University  of Naples,
 Italy}
\email{luba.softova@unina2.it}

\subjclass{Primary  35J25; Secondary 35B40,  42B20,  42B35}

\keywords{ Generalized weighted Morrey spaces;  Muckenhoupt weight; sub-linear integrals; Calder\'on-Zygmund integrals;  commutators;
BMO;  VMO; elliptic equations; Dirichlet problem}

\maketitle

\begin{abstract}
We show continuity in generalized weighted Morrey spaces $M_{p,\varphi}(w)$ of sub-linear integral operators
generated by  some classical integral operators and commutators.  The obtained estimates are used to study global regularity of the solution
of the Dirichlet problem for linear  uniformly elliptic operators with discontinuous  data.
 \end{abstract}

\section{Introduction}\label{secIntr}
\setcounter{theorem}{0}

In the present work we study the global regularity in {\it generalized weighted Morrey spaces}  $M_{p,\varphi}(w)$ of the solutions of a class of elliptic partial differential equations (PDEs). Recall that the classical {\it Morrey spaces} $L_{p,\lambda}$ were introduced  by Morrey  in  \cite{Morrey} in order to study the local H\"older regularity  of the solutions of elliptic  systems.
 In \cite{ChFra} Chiarenza and Frasca show
 boundedness    in  $L_{p,\lambda}(\R^n)$  of the {\it  Hardy-Littlewood maximal operator} $\M$ and  the
 Calder\'on-Zygmund operator $\K$
$$
\M f(x) = \sup_{\B(x)}\int_{\B(x)} |f(y)|\, dy\,,\quad \K f(x) =P.V. \int_{\R^n}\frac{ f(y)}{|x-y|^n}\, dy\,.
$$
Integral operators of that kind appear in the representation formulae of the solutions of various  PDEs. Thus the continuity of
the Calder\'on-Zygmund integral  in certain  functional  space permit to study the regularity of the solutions of boundary value problems for linear PDEs in the corresponding space.

In \cite{Mi} Mizuhara extended the definition of $L_{p,\lambda}$ taking a {\it non-negative measurable function}
$\phi(x,r):\Rn\times\R_+\to\R_+$ instead of  {\it the Morrey weight} $r^\lambda$ in the definition of $L_{p,\lambda}.$  Precisely, 
$f\in L_{p,\phi}(\R^n)$ if $f\in L_p^{\loc}(\R^n),$ $p\in[1,\infty)$ and 
$$
\|f\|_{p,\phi}=\sup_{\B_r(x)}\left( \frac1{\phi(x,r)} \int_{\B_r(x)} |f(y)|^p\,dy \right)^{\frac1p}<\infty
$$
and the supremo is taken over all balls in $\R^n.$

  Later Nakai extended  the results of Chiarenza and Frasca to the case of
 $L_{p,\phi}.$ Imposing the next  integral and doubling conditions on $\phi$ (see \cite{Na})
\begin{align*}
 &\kappa_1^{-1}\leq \frac{\phi(x_0,t)}{\phi(x_0,r)}   \leq \kappa_1, \quad r \leq  t \leq 2r,\\
 &\int_r^\infty\frac{ \phi(x_0,t)}{t^{n+1}} dt  \le
 \kappa_2 \,\frac{ \phi(x_0,r)}{r^n}
\end{align*}
he proved  boundedness  of $\M$ and $\K$  
$$
\|\M f\|_{p,\phi}\leq C \| f\|_{p,\phi},\qquad    \|\K f\|_{p,\phi}\leq C \| f\|_{p,\phi}
$$
 for all  $ f\in L_{p,\phi}(\R^n),$ $p\geq 1.$

The next extension  of the Morrey spaces is given by  the first author. He   defined  generalized Morrey spaces $M_{p,\varphi}$ with normalized norm under more general condition on the  weight $\varphi:\R^n\times\R_+\to \R_+$ and considered continuity of various classical integral operators from one space $M_{p,\varphi_1}$ to another $M_{p,\varphi_2}$ under suitable condition on the pair $(\varphi_1, \varphi_2).$ In
\cite{GulDoc} (see also \cite{GulBook, GulJIA})  it  is shown that if
\begin{equation}\label{GulZSIO}
\int_r^{\infty} \varphi_1(x,t)\frac{dt}{t} \le  C \,\varphi_2(x,r) 
\end{equation}
then the  operator $\K$ is bounded from $M_{p,\varphi_1}$ to $M_{p,\varphi_2}$ for $p > 1$ and
from $M_{1,\varphi_1}$ to the weak space  $WM_{1,\varphi_2}$.
 In \cite{AkbGulMus1, GULAKShIEOT2012}, Guliyev et al.  introduced a weaker condition on the pair $(\varphi_1,\varphi_2)$
 under which boundedness of  the classical integral operators  from $M_{p,\varphi_1}$ to $M_{p,\varphi_2}$ is proved. Precisely,   if
\begin{equation}\label{wueq2}
\int_{r}^{\infty}\frac{\es_{t<s<\infty}\varphi_{1}(x,s)s^{\frac{n}{p}}}{t^{\frac{n}{p}}}\, \frac{dt}{t}\leq C\,\varphi_{2}(x,r),
\end{equation}
then $\K$ is bounded from $M_{p,\varphi_1}$   to another $M_{p,\varphi_2}$ for  $p>1$ and from $M_{1,\varphi_1}$ to the weak space $WM_{1,\varphi_2}.$     Let us note that the condition  \eqref{GulZSIO}  describes wider class of weight functions than  \eqref{wueq2} (see  \cite{GulEMJ2012}).

For more recent  results on boundedness and continuity of singular integral operators in  generalized Morrey
and new functional spaces  and their application in   the  theory of the differential equations  see
\cite{AkbGulMus1, GulJIA, GS, GuHaSam,  GulSoft1, GulSoft2,   Pal,  Sf1, Sf2} and the references therein.

Consider now the weighted  $L_p$-spaces  $L_{p,w}$ consisting of measurable functions $f$ for which
$$
\|f\|_{p,w}=\left(\int_{\R^n}|f(y)|^pw(y)\,dy \right)^{\frac1p}\,.
$$

 In   \cite{Muckenh}  Muckenhoupt  showed  that the well known maximal inequality holds in   $L_{p,w}$  if and only if the weight $w$ satisfies certain integral condition called {\it $A_p$-condition}.  Later,
 Coifman and Fefferman \cite{CoiFeff}   studied the continuity of some classical singular  integrals in the {\it Muckenhoupt spaces} (see also \cite{MuckWh1, MuckWh}).

 Recently, Komori and Shirai \cite{KomShir}  defined the weighted Morrey spaces $L_{p,\kappa}(w)$  endowed by the norm
$$
\|f\|_{p,w,k}= \sup_{\B}\left( \frac1{w(\B)^k} \int_\B  |f(y)|^pw(y)\,dy\right)^\frac1p\,.
$$
They  studied the boundedness of the Calder\'on-Zygmund operator   $\K$   in these spaces.   A natural extension of their  results are 
the generalized  weighted Morrey spaces  $M_{p,\varphi}(w)$ with $w\in A_p $ and $\varphi$ satisfying  \eqref{GulZSIO}. 
 In  \cite{GulEMJ2012}    (see also  \cite{GulKarMustSer, KarGulSer}) it is proved  boundedness in $M_{p,\varphi}(w)$  of sub-linear operators generated by classical operators as $\M,$ $\K,$ the Riesz potential and others, covering such way the results obtained in \cite{Na} and  \cite{KomShir}. Our goal here is to obtain a priori estimate for the solution of the Dirichlet problem for linear elliptic equations in these spaces.

The paper is organized as follows.
We begin  introducing  the  functional spaces that we are going to use.
In 
 Sections~\ref{sec3a} and~\ref{sec3} we study continuity in the spaces $M_{p,\varphi}(w)$ of certain sub-linear integrals and their commutators with  functions with bounded mean oscillation.  These results permit to obtain continuity of the Calder\'on-Zygmund operator,  with bounded functions and some nonsingular integrals which is done in Section~\ref{sec5}. The last section is dedicated  to the    Dirichlet problem for  linear  elliptic equations with discontinuous coefficients.  
This problem is firstly studied by  Chiarenza, Frasca and Longo. In their pioneer works \cite{ChFraL1, ChFraL2} they prove unique  strong solvability of
\begin{equation} \label{LDP}
\begin{cases}
    \L u \equiv a^{ij}(x)D_{ij} u=f(x) \quad \text{ a.a.  } x\in  \Omega,\\
    u\in\ W^2_p(\Omega)\cap \overset{\circ}{W}{}^1_p(\Omega), \   p\in(1,\infty), \ a^{ij}\in VMO
  \end{cases}
\end{equation}
extending  this way the classical theory of operators with continuous coefficients to those   with discontinuous coefficients.
Later their results  have been   extended in the Sobolev-Morrey spaces
$W^2_{p,\lambda}(\Omega)\cap  \overset{\circ}{W}{}^1_p(\Omega),$   $\lambda\in(1,n)$   (see \cite{DPR})  and the generalized Sobolev-Morrey spaces
$W^2_{p,\phi}(\Omega)\cap \overset{\circ}{W}{}^1_p(\Omega)$  (see  \cite{Sf2})  with $\phi$ as in \cite{Na}.
  In \cite{GulSoft1}  we have studied  the regularity of the solution  of \eqref{LDP}  in generalized Sobolev-Morrey  spaces $W^2_{p,\varphi}(\Omega)$ where the weight function $\varphi$  satisfies a certain  supremal condition derived from 
\eqref{wueq2}.
 We show that   $\L u\in M_{p,\varphi}(\Omega)$ implies  $D_{ij}u\in M_{p,\varphi}(\Omega)$ satisfying the estimate
$$
\|D^2u\|_{p,\varphi;\Omega}\leq C\big(\|\L  u\|_{p,\varphi;\Omega} + \|u\|_{p,\varphi;\Omega}  \big)\,.
$$
These studies are extended on divergence form  elliptic/parabolic equations in \cite{BS, GulSoft3}.

In this paper we use the following notions:
\begin{align*}
&D_iu=\partial u/ \partial x_i, \  Du=(D_1u,\ldots,D_nu) \text{ means the gradient of  } u,\\
&   D_{ij}u= \partial^2 u/\partial x_i\partial x_j, \ D^2u=\{D_{ij}u\}_{ij=1}^n \text{ means the Hessian matrix of } u,\\
&\B_r(x_0)=\{x\in{\R}^n:\ |x-x_0|<r  \} \text{ is a ball  centered at a fixed point } x_0\in{\R}^n,\\
&\B_r(x)\equiv\B_r\equiv \B \text{ is a ball centered at any point } x\in {\R}^n, \   |{\B}_r|=C r^n,\\
&\B_r^c=\R^n\sm \B_r,\quad 2{\B}_r= \B_{2r},\\
&{\SS}^{n-1}=\{y\in  {\R}^n:\  |y-x|=1 \} \text{ is a unit sphere at } {\R}^n \text{ centered in } x\in \R^n,\\
&\R^n_+=\{x\in\Rn:\  x_n>0\}\,.
\end{align*}
For any measurable set $A$ and $f\in L_p(A),$ $1<p<\infty$ we write
$$
\|f\|_{L_p(A)}= \|f\|_{p;A}=\left( \int_A|f(y)|^p\,dy  \right)^{\frac1p}, \quad \|\cdot\|_{p;\R^n}\equiv \|\cdot\|_p\,.
$$
 The standard summation convention on repeated upper and lower indices is  adopted. The letter $C$ is used for various positive  constants and may change from one
occurrence to another.

\section{Weighted spaces }\label{sec2}
\setcounter{theorem}{0}
\setcounter{definition}{0}
\setcounter{lemma}{0}
\setcounter{corollary}{0}

We start with the definitions of some function   spaces that we are going to use.
\begin{definition}(see \cite{JN, Sar})
Let  $a\in L_1^{\rm loc}({\R}^n)$ and $a_{{\B}_r}= \frac{1}{|{\B}_r|}\int_{{\B}_r} a(x)\, dx. $   Define
$$
\gamma_a(R)= \sup_{ r\leq R}\frac1{|{\B}_r|}\int_{{\B}_r}|a(y)-a_{{\B}_r}|\,dy\qquad \forall \  R>0.
$$
 We say that $a\in BMO$  ({\it bounded mean oscillation})  if
$$
\|a\|_{\ast}=\sup_{ R>0}\gamma_a(R)<+\infty.
$$
The quantity $\|a\|_\ast$ is a norm in $BMO$ modulo constant functions under which $BMO$ is a Banach space.
If
$$
\lim_{R\to 0}\gamma_a(R)=0
 $$
then  $a\in VMO$   ({\it vanishing mean oscillation})  and we call   $\gamma_a(R)$ a   $VMO$-modulus of $a.$

For any bounded domain $\Omega\subset {\R}^n$ we define
 $BMO(\Omega)$ and $VMO(\Omega)$ taking  $a\in L_1(\Omega)$ and  integrating over  $\Omega_r=\Omega\cap\B_r. $
\end{definition}
According to \cite{A}, having a function $a\in BMO(\Omega)$ or  $VMO(\Omega)$ it is possible to extend it in the whole  space  preserving its
 $BMO$-norm or $VMO$-modulus, respectively. In the following we use this  extension  without explicit references.
\begin{lemma}{\rm(John-Nirenberg lemma, \cite{JN})}\label{lem2.4.}
Let $a\in BMO$ and $p\in (1,\i)$. Then for any ball $\B$  holds
$$
\left( \frac{1}{|\B|}\int_{\B}|a(y)-a_{\B}|^p dy\right)^{\frac{1}{p}}
\leq C(p) \|a\|_{*} .
$$
\end{lemma}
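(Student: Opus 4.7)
The plan is to derive the $L^p$ bound from the classical exponential distribution estimate of John and Nirenberg and then integrate in layer-cake fashion. The heart of the matter is the pointwise-in-$\lambda$ statement: there exist dimensional constants $C_1, C_2>0$ such that for every ball $\B$ and every $\lambda>0$,
$$
|\{y\in\B:\ |a(y)-a_{\B}|>\lambda\}|\leq C_1|\B|\,\exp\!\left(-\frac{C_2\lambda}{\|a\|_*}\right).
$$
Assuming this, the estimate in the lemma is immediate from the layer-cake representation:
$$
\int_{\B}|a(y)-a_{\B}|^{p}\,dy=p\int_{0}^{\infty}\lambda^{p-1}\,|\{y\in\B:\ |a(y)-a_{\B}|>\lambda\}|\,d\lambda\leq C_1|\B|\,p\int_{0}^{\infty}\lambda^{p-1}e^{-C_{2}\lambda/\|a\|_{*}}\,d\lambda,
$$
and the last integral equals $\Gamma(p+1)(\|a\|_{*}/C_{2})^{p}$. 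Dividing by $|\B|$ and extracting the $p$-th root gives exactly $C(p)\|a\|_{*}$ on the right, with $C(p)$ of order $p$ for large $p$.

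To prove the exponential distribution estimate, I would normalize $\|a\|_{*}=1$ and set
$$
F(\lambda)=\sup_{\B}\frac{1}{|\B|}|\{y\in\B:\ |a(y)-a_{\B}|>\lambda\}|.
$$
Fix a ball $\B$ and a large level $\lambda>1$. Run the Calderón--Zygmund stopping-time decomposition on $b(y)=|a(y)-a_{\B}|$ on $\B$ at height $\lambda$: since $(1/|\B|)\int_{\B}b\le 1<\lambda$, bisection produces a maximal disjoint family of subballs (or cubes) $\{\B_{j}\}$ with $\lambda<|\B_{j}|^{-1}\int_{\B_{j}}b\le 2^{n}\lambda$, and with $b(y)\le\lambda$ for a.e.\ $y\in\B\setminus\bigcup_{j}\B_{j}$. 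Summing the lower bounds gives $\sum|\B_{j}|\le\lambda^{-1}|\B|$, while the upper bound yields $|a_{\B_{j}}-a_{\B}|\le 2^{n}\lambda$. Consequently, for any $\mu>0$,
$$
\{y\in\B:\ b(y)>\mu+2^{n}\lambda\}\subset\bigcup_{j}\{y\in\B_{j}:\ |a(y)-a_{\B_{j}}|>\mu\},
$$
so by the definition of $F$,
$$
F(\mu+2^{n}\lambda)\le\lambda^{-1}F(\mu).
$$

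Choosing $\lambda=e\cdot 2^{-n}$, this gives $F(\mu+e)\le e^{-1}F(\mu)$, which by induction produces the geometric decay $F(ke)\le e^{-k}F(0)\le e^{-k}$, and interpolating in $\mu$ yields the announced exponential bound with $C_{2}=1/e$ and $C_{1}=e$ (after unraveling the normalization, $\lambda$ is replaced by $\lambda/\|a\|_{*}$). The only delicate point — and the step that needs real care — is the Calderón--Zygmund stopping-time construction yielding the recursion $F(\mu+2^{n}\lambda)\le\lambda^{-1}F(\mu)$; the rest is routine bookkeeping and the layer-cake integration described above.
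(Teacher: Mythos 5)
The paper offers no proof of this lemma at all: it is quoted as the classical John--Nirenberg inequality with a citation to \cite{JN}, so there is nothing internal to compare against. Your route --- the exponential distribution inequality via a Calder\'on--Zygmund stopping-time argument, followed by layer-cake integration --- is exactly the classical proof, and the skeleton is sound: the recursion $F(\mu+2^{n}\lambda)\le\lambda^{-1}F(\mu)$ is correctly derived (the inclusion of the superlevel set into $\bigcup_j\{y\in \B_j:\ |a(y)-a_{\B_j}|>\mu\}$ and the bound $\sum_j|\B_j|\le\lambda^{-1}|\B|$ are both right), and the layer-cake step with $\Gamma(p+1)(\|a\|_*/C_2)^p$ gives precisely the stated $L^p$ bound with $C(p)\sim p$.

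There is, however, a concrete error in the ``routine bookkeeping'' at the end. You choose $\lambda=e\cdot 2^{-n}$, but for $n\ge 2$ this is less than $1$, which contradicts the requirement $\lambda>1$ you yourself impose to start the stopping-time decomposition (you need the average of $b$ over $\B$, which is only known to be $\le 1$, to lie below $\lambda$). Moreover, even taken formally, the contraction factor is $\lambda^{-1}=2^{n}/e\ge 1$ for $n\ge 2$, so the recursion yields $F(\mu+e)\le(2^{n}/e)F(\mu)$, which gives no decay at all. The fix is one line: take, say, $\lambda=2$, obtaining $F(\mu+2^{n+1})\le\tfrac12 F(\mu)$ and hence $F(k\cdot 2^{n+1})\le 2^{-k}$, which after interpolation in $\mu$ gives the exponential bound with \emph{dimension-dependent} constants $C_1,C_2$ (e.g.\ $C_2=2^{-(n+1)}\ln 2$), not the dimension-free $C_1=e$, $C_2=1/e$ you announce. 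A second, minor point: your $F$ is a supremum over balls while the stopping-time pieces $\B_j$ produced by bisection are cubes; to make the self-improving inequality legitimate you should run the whole argument over cubes (equivalently, use the cube-based $BMO$ norm, which is comparable to the ball-based one). Neither issue affects the validity of the strategy, but as written the final step fails.
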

As an immediate consequence of Lemma~\ref{lem2.4.} we get the next property.
\begin{corollary}
 Let $a\in BMO$ then for all $0<2r<t$  holds
\begin{equation}\label{propBMO}
\big|a_{\B_r}-a_{\B_t}\big| \le C \|a\|_\ast \ln \frac{t}{r}
\end{equation}
where the constant is independent of $a,x,t$ and $r.$
\end{corollary}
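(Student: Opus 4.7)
The plan is to prove the estimate by telescoping through a dyadic chain of concentric balls between $\B_r$ and $\B_t$, using the $BMO$ norm to control the jump across each consecutive pair.

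More precisely, I would fix the common center $x$ of $\B_r$ and $\B_t$, choose the unique integer $k\ge 1$ with $2^k r \le t < 2^{k+1}r$, and write
\[
a_{\B_r} - a_{\B_t} = \bigl(a_{\B_r} - a_{\B_{2^k r}}\bigr) + \bigl(a_{\B_{2^k r}} - a_{\B_t}\bigr)
= \sum_{j=0}^{k-1}\bigl(a_{\B_{2^j r}} - a_{\B_{2^{j+1}r}}\bigr) + \bigl(a_{\B_{2^k r}} - a_{\B_t}\bigr).
\]
For any two concentric balls $\B\subset \widetilde{\B}$ with $|\widetilde{\B}|\le 2^n|\B|$ (which covers both the dyadic jumps $\B_{2^j r}\subset \B_{2^{j+1}r}$ and the final inclusion $\B_{2^k r}\subset \B_t$, since $t<2^{k+1}r$), the triangle inequality and the definition of BMO give
\[
|a_{\B} - a_{\widetilde{\B}}|
\le \frac{1}{|\B|}\int_{\B} |a(y)-a_{\widetilde{\B}}|\,dy
\le \frac{2^n}{|\widetilde{\B}|}\int_{\widetilde{\B}} |a(y)-a_{\widetilde{\B}}|\,dy
\le 2^n\|a\|_{\ast}.
\]

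Applying this bound to each of the $k+1$ terms in the telescoping sum yields
\[
|a_{\B_r} - a_{\B_t}| \le (k+1)\,2^n\,\|a\|_{\ast}.
\]
Since $k\le \log_2(t/r)$ and, by the hypothesis $2r<t$, we have $\log_2(t/r)>1$, so $k+1 \le 2\log_2(t/r) = (2/\ln 2)\ln(t/r)$. This gives the desired inequality with $C=2^{n+1}/\ln 2$, independent of $a$, $x$, $r$ and $t$.

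There is no real obstacle: the argument is a standard dyadic telescoping, and the only point requiring mild care is that the factor $2^n$ in the second inequality comes precisely from the volume ratio $|\widetilde{\B}|/|\B|$, which is why one needs the concentric inclusions to be by a factor of at most $2$ in radius — handled here by the dyadic decomposition and the choice of $k$ so that $\B_{2^k r}\subset \B_t\subset \B_{2^{k+1}r}$.
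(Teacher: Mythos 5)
Your proof is correct. The paper states this corollary without an explicit argument, merely presenting it as an immediate consequence of the John--Nirenberg lemma; your dyadic telescoping through the chain $\B_r\subset\B_{2r}\subset\dots\subset\B_{2^kr}\subset\B_t$ is exactly the standard argument being left implicit, and the bookkeeping (the volume-ratio factor $2^n$ for each consecutive pair, the count $k+1\le \frac{2}{\ln 2}\ln\frac{t}{r}$ using $2r<t$) is carried out accurately. If anything, your route is slightly more elementary than the paper's framing, since it uses only the $L^1$ definition of $\|a\|_\ast$ and does not need Lemma~2.2 at all.
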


We call {\it weight} a non-negative locally integrable function  on  $\R^n.$  Given a weight $w$ and a measurable set $\E$
we denote the $w$-measure of $\E$ by
$$
w(\E)=\int_\E w(x)\,dx\,.
$$
Denote by $L_{p,w}(\R^n) $ or $L_{p,w}$ the weighted $L_p$ spaces. It turns out that the strong type $(p,p)$ inequality
$$
\left(\int_{\R^n}(\M f(x))^pw(x)\,dx \right)^{\frac1p}   \leq C_p   \left(\int_{\R^n}|f(x)|^pw(x)\, dx  \right)^{\frac1p}
$$
holds for all $f\in L_{p,w}$ if and only if the weight function  satisfies the {\it Muckenhoupt}  $A_p$-condition
\begin{equation}\label{Ap}
[w]_{A_p}:  =  \sup_{\B}
\left(\frac{1}{|\B|}\int_{\B} w(x)\,dx\right)\left(\frac{1}{|\B|}\int_{\B}
w(x)^{-\frac1{p-1}}\,dx\right)^{p-1}<\infty\,.
\end{equation}
The expression $[w]_{A_p}$  is called {\it characteristic constant} of $w.$
The function $w$ is $A_1$ weight if $\M w(x)\leq C_1 w(x)$ for almost all  $x\in \R^n.$ The minimal constant $C_1$ for which the inequality holds is  the $A_1$ {\it characteristic constant } of $w.$

We summarize some basic  properties of the $A_p$ weights in the next  lemma (see \cite{Grafakos, Muckenh} for more details).

\begin{lemma}
   (1)  \   Let  $w\in A_p$ for $1\leq  p<\infty.$  Then for each $\B$
\begin{equation}  \label{norma-w}
1\leq [w]_{A_{p}(\B)}^{\frac1p}=|\B|^{-1} \|w \|_{L_1(\B)}^{\frac1p} \, \|w^{-\frac1p} \|_{L_{p'}(\B)} \leq [w]_{A_p}^{\frac1p}\,.
\end{equation}

(2) The function $w^{-\frac1{p-1}}$ is in $A_{p'}$  where $\frac1p+\frac1{p'}=1,$ $1<p<\infty$ with characteristic constant
$$
[w^{-\frac1{p-1}}]_{A_{p'}}=[w]_{A_p}^{\frac1{p-1}}\,.
$$

(3) \   The classes  $A_p$ are increasing as $p$ increases and
$$
[w]_{A_q}\leq [w]_{A_p}, \qquad  1\leq q<p<\infty\,.
$$

(4) \ The measure $w(x)dx$ is doubling, precisely,  for all $\lambda>1$
$$
w(\lambda \B) \le \lambda^{np}[w]_{A_p} w(\B)\,.
$$

(5)  \    If $w \in A_p$ for some $1 \le p \le \infty$, then there exist $C>0$ and $\delta > 0$
such that for any ball $\B$ and a measurable set $\E \subset \B$,
$$
\frac{1}{[w]_{A_p}}\left( \frac{|\E|}{|\B|} \right)\leq\frac{w(\E)}{w(\B)} \le C \left( \frac{|\E|}{|\B|} \right)^{\delta}.
$$

(6)    For each $1\leq p<\infty$ we have
$$
\bigcup_{1\leq p<\infty}A_p=A_\infty\quad \text{ and  } \quad
[w]_{A_\infty}\leq [w]_{A_p}\,.
$$

(7)  For each $a\in BMO,$ $1\leq p<\infty$  and $w\in A_\infty$ we have
\begin{equation}  \label{weightBMO}
\|a\|_{\ast}  =C \sup_{\B}\left(\frac{1}{w(\B)}
\int_{\B}|a(y)-a_{\B}|^p w(y)\,dy\right)^{\frac{1}{p}}\,.
\end{equation}
\end{lemma}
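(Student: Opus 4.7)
The plan is to verify the seven items in turn; all are standard consequences of the Muckenhoupt condition, so the emphasis is on pointing to the right tool for each and appealing to the classical references \cite{Grafakos, Muckenh} for the purely computational details.

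For (1), I would start from the identity $|\B|=\int_{\B}w^{1/p}w^{-1/p}\,dx$ and apply H\"older's inequality with exponents $p,p'$ to obtain $|\B|^{p}\le\|w\|_{L_{1}(\B)}\,\|w^{-1/p}\|_{L_{p'}(\B)}^{p}$; dividing by $|\B|^{p}$ and invoking the definition of $[w]_{A_{p}(\B)}$ produces the asserted lower bound $1\le[w]_{A_{p}(\B)}^{1/p}$, and the upper bound is trivial after taking the supremum over balls. Item (2) is purely algebraic: with $\sigma=w^{-1/(p-1)}$ and $(p'-1)(p-1)=1$, substituting into the definition of $[\sigma]_{A_{p'}}$ and raising to the power $p-1$ identifies the product of averages with $[w]_{A_{p}}$. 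Item (3) is obtained by applying Jensen's inequality for the convex power $t\mapsto t^{(p-1)/(q-1)}$ to the second factor of $[w]_{A_{p}}$, and item (6) is then essentially the definition of $A_{\infty}$ together with this monotonicity.

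For the doubling inequality (4), I would write the $A_{p}$ condition on $\lambda\B$ as $w(\lambda\B)\le[w]_{A_{p}}\,|\lambda\B|^{p}\big(\int_{\lambda\B}w^{-1/(p-1)}\big)^{-(p-1)}$ and then estimate the integral from below by $\int_{\B}w^{-1/(p-1)}\ge|\B|^{p/(p-1)}w(\B)^{-1/(p-1)}$, using the same H\"older inequality from (1). Rearranging produces $w(\lambda\B)\le\lambda^{np}[w]_{A_{p}}\,w(\B)$ directly.

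For the two-sided comparison (5), the left inequality is obtained by the same H\"older identity applied inside the ball, namely $(|\E|/|\B|)^{p}\le[w]_{A_{p}}\,w(\E)/w(\B)$, from which the stated bound follows. The right inequality is the deeper part: it is the reverse H\"older property of $A_{\infty}$ weights (Coifman--Fefferman \cite{CoiFeff}) together with the fact that $w\in A_{p}\subset A_{\infty}$, and this is the step I expect to be the main obstacle, so I would quote it rather than reprove it. Finally, for (7), I would use the reverse H\"older exponent $r>1$ of $w$ to write $\big(\tfrac{1}{w(\B)}\int_{\B}|a-a_{\B}|^{p}w\,dy\big)^{1/p}\lesssim\big(\tfrac{1}{|\B|}\int_{\B}|a-a_{\B}|^{pr'}\,dy\big)^{1/(pr')}$ via H\"older with exponents $r$ and $r'$, and then apply Lemma~\ref{lem2.4.} to bound the unweighted mean oscillation by $\|a\|_{\ast}$; the reverse direction is immediate by testing the supremum against constants $a_{\B}$ and using the doubling of $w$ from (4).
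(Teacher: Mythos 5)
The paper offers no proof of this lemma at all: it is stated as a summary of classical facts with a pointer to \cite{Grafakos, Muckenh}, so your sketches supply strictly more detail than the source. For items (1), (2), (4) and the first half of (7) they are exactly the standard computations: the H\"older identity $|\B|=\int_{\B}w^{1/p}w^{-1/p}\,dx$, the duality algebra $(p'-1)(p-1)=1$, the lower bound on $\int_{\lambda\B}w^{-1/(p-1)}$, and the reverse H\"older argument combined with Lemma~\ref{lem2.4.} are all correct as written.

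Two of your steps, however, establish the standard form of the inequality rather than the form literally printed in the lemma, and you should flag the discrepancy instead of asserting that ``the stated bound follows.'' In (3), your Jensen argument with the convex power $t\mapsto t^{(p-1)/(q-1)}$, $q<p$, yields $[w]_{A_p}\le[w]_{A_q}$: the characteristic constants \emph{decrease} as the index increases, so the displayed inequality $[w]_{A_q}\le[w]_{A_p}$ has the subscripts transposed and is false for non-constant weights. In (5), your H\"older computation gives the correct bound $\bigl(|\E|/|\B|\bigr)^{p}\le[w]_{A_p}\,w(\E)/w(\B)$, but since $|\E|/|\B|\le1$ this does \emph{not} imply the first-power bound in the statement; indeed the first-power version fails for $w(x)=|x|^{a}$, $0<a<n(p-1)$, with $\E$ a small ball at the origin, so the exponent $p$ belongs in the statement and the implication you claim goes the wrong way. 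Finally, in (7) the reverse direction is not a consequence of doubling: to return from the $w$-average to the Lebesgue average one needs the $A_p$ condition itself, e.g.\ $\frac{1}{|\B|}\int_{\B}|a-a_{\B}|\,dy\le\frac{1}{|\B|}\bigl(\int_{\B}|a-a_{\B}|^{p}w\,dy\bigr)^{1/p}\,\|w^{-1/p}\|_{L_{p'}(\B)}\le[w]_{A_p}^{1/p}\bigl(\frac{1}{w(\B)}\int_{\B}|a-a_{\B}|^{p}w\,dy\bigr)^{1/p}$, which is item (1) (equivalently the left half of (5)) rather than item (4). None of this affects the rest of the paper, which only ever uses these facts in their standard, corrected form.
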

The next result follows from \cite[Lemma~4.4]{GulEMJ2012}.
\begin{lemma}
 Let $w \in A_p$  with $1<p<\infty$ and $a\in BMO.$  Then
\begin{equation}\label{EMJ}
\Big( \frac{1}{w^{1-p'}(\B)} \int_{\B} |a(y)-a_{\B}|^{p'} w(y)^{1-p'}\, dy\Big)^{\frac{1}{p'}}
\le C [w]_{A_p}^{\frac{1}{p}}  \| a \|_{\ast},
\end{equation}
where $C$ is independent of $a$, $w$ and  $\B.$
\end{lemma}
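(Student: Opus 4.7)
The plan is to reduce \eqref{EMJ} to a single application of the weighted $BMO$ norm characterization \eqref{weightBMO}, applied with the dual weight $v:=w^{1-p'}=w^{-1/(p-1)}$ and exponent $p'$ in place of $(w,p)$. By direct inspection, the left-hand side of \eqref{EMJ} is precisely the normalized weighted $L^{p'}$ mean oscillation of $a$ with respect to $v(y)\,dy$, so once \eqref{weightBMO} is legitimately applied in this setting, the inequality is essentially automatic.

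First I would verify the hypothesis of \eqref{weightBMO} for the weight $v$. By property~(2) of the Muckenhoupt lemma, $v\in A_{p'}$ with the explicit relation
$$
[v]_{A_{p'}}=[w]_{A_p}^{p'-1}=[w]_{A_p}^{1/(p-1)},
$$
and by property~(6) we obtain $v\in A_\infty$ with $[v]_{A_\infty}\le[v]_{A_{p'}}$. Applied with $(v,p')$ in place of $(w,p)$, \eqref{weightBMO} yields
$$
\left(\frac{1}{v(\B)}\int_\B |a(y)-a_\B|^{p'}v(y)\,dy\right)^{1/p'}\le C(v)\,\|a\|_\ast,
$$
which is exactly the left-hand side of \eqref{EMJ}, modulo the shape of the constant $C(v)$.

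The remaining and essentially only nontrivial task is to check that $C(v)$ can be taken as $C_n\,[w]_{A_p}^{1/p}$ with $C_n$ independent of $a$, $w$ and $\B$. This requires tracking the standard proof of \eqref{weightBMO}, which combines Lemma~\ref{lem2.4.} (John--Nirenberg) with the $A_\infty$ self-improvement of property~(5) via the layer-cake identity
$$
\int_\B |a-a_\B|^{p'}v\,dy=p'\int_0^\infty\lambda^{p'-1}\,v\bigl(\{y\in\B:|a(y)-a_\B|>\lambda\}\bigr)\,d\lambda,
$$
converting the Lebesgue-measure exponential decay of $\{|a-a_\B|>\lambda\}$ from John--Nirenberg into $v$-measure decay via property~(5). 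A careful bookkeeping shows that $[v]_{A_\infty}$ enters as a factor $[v]_{A_\infty}^{1/p'}$.

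Finally, I convert this factor to the stated form via the $A_p$--$A_{p'}$ duality and the identity $(p'-1)/p'=1-1/p'=1/p$, obtaining
$$
[v]_{A_\infty}^{1/p'}\le[v]_{A_{p'}}^{1/p'}=[w]_{A_p}^{(p'-1)/p'}=[w]_{A_p}^{1/p},
$$
which matches the conclusion of \eqref{EMJ}. The main obstacle is precisely this last bookkeeping step: making the dependence of $C(v)$ on $[v]_{A_\infty}$ completely explicit in the John--Nirenberg plus property~(5) argument, since everything else reduces to the duality relation already recorded in property~(2).
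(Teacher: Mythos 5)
Your reduction is the natural one and, as far as one can tell, is the route taken by the source the paper itself relies on (the paper offers no proof of this lemma beyond the citation of \cite[Lemma~4.4]{GulEMJ2012}): pass to the dual weight $v=w^{1-p'}\in A_{p'}$ via property~(2), recognize the left-hand side of \eqref{EMJ} as the $v$-weighted $L^{p'}$ mean oscillation of $a$, and invoke the weighted John--Nirenberg characterization \eqref{weightBMO}. All of that, including the exponent arithmetic $[v]_{A_{p'}}^{1/p'}=[w]_{A_p}^{(p'-1)/p'}=[w]_{A_p}^{1/p}$, is correct, and it already yields the qualitative statement with some constant $C(v)$ depending on the weight --- which is all the paper actually uses downstream.

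The gap is in the one step you yourself single out as the main obstacle and then dispose of by assertion: that in \eqref{weightBMO} for the weight $v$ the characteristic constant ``enters as a factor $[v]_{A_\infty}^{1/p'}$''. The argument you sketch does not produce that power. Running Lemma~\ref{lem2.4.} through the layer-cake identity together with property~(5) (with constants $C,\delta$ for $v$ and John--Nirenberg constants $c_1,c_2$) gives
$$
\frac{1}{v(\B)}\int_\B|a(y)-a_\B|^{p'}v(y)\,dy\;\le\; C\,c_1^{\delta}\,C(p')\Big(\frac{\|a\|_\ast}{c_2\,\delta}\Big)^{p'},
$$
so after taking $p'$-th roots the weight enters through $\delta^{-1}$ to the \emph{first} power, not the $1/p'$-th. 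Moreover, property~(5) as stated gives no quantitative relation between $\delta$ and the characteristic constant, so the bookkeeping cannot be completed from the lemmas at hand; with the standard quantitative reverse H\"older input one has $\delta^{-1}\le C_n[v]_{A_\infty}\le C_n[v]_{A_{p'}}=C_n[w]_{A_p}^{1/(p-1)}$, and since $1/(p-1)>1/p$ and $[w]_{A_p}\ge 1$ this is strictly weaker than the claimed $[w]_{A_p}^{1/p}$. As written, your argument therefore establishes \eqref{EMJ} only with $[w]_{A_p}^{1/p}$ replaced by an unspecified $C(w)$ (or by $[w]_{A_p}^{1/(p-1)}$ after the reverse H\"older tracking); obtaining the stated power $1/p$ requires a genuinely different quantitative argument, not just careful bookkeeping of the one you describe.
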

\begin{definition} \label{def1}
Let $\varphi(x,r)$ be weight in  $\R^n\times\R_+\to \R_+$ and  $w\in A_p,$  $ p\in[1,  \infty).$ The generalized weighted Morrey space
$M_{p,\varphi}(\R^n,w)$ or $ M_{p,\varphi}(w)$ consists of all
functions $f\in L_{p,w}^{\rm loc}(\R^n)$ such that
$$
\|f\|_{p,\varphi,w} = \sup_{x\in\R^n, r>0}
\varphi(x,r)^{-1} \left(w(\B_r(x))^{-1} \, \int_{\B_r(x)}|f(y)|^p w(y)\, dy\right)^{\frac1p}<\infty\,.
$$
For any bounded domain $\Omega$ we define $M_{p,\varphi}(\Omega,w)$  taking  $f\in L_{p,w}(\Omega)$ and  integrating over  $\Omega_r=\Omega \cap \B_r(x), x\in \Omega.$

Generalized Sobolev-Morrey  space $W^2_{p,\varphi}(\Omega,w)$  consists of all   functions $u\in W^2_{p.w}(\Omega)$
with distributional derivatives $D^s u\in M_{p,\varphi}(\Omega,w)$,  $0\leq |s|\leq 2$ endowed by the norm
$$
\|u\|_{W^2_{p,\varphi}(\Omega,w)}=\sum_{0\leq |s|\leq 2}\|D^s f\|_{p,\varphi,w;\Omega}.
$$
The space $ W^2_{p,\varphi}(\Omega,w)\cap \overset{\circ}{W}{}^1_{p}(\Omega,w)$  consists
of all functions $u\in W^2_{p,w}(\Omega)\cap \overset{\circ}{W}{}^1_{p,w}(\Omega)$ with $D^su\in M_{p,\varphi}(\Omega,w),$ $0\leq |s|\leq 2$
and is    endowed by the same norm.
Recall that $\overset{\circ}{W}{}^1_{p,w}(\Omega)$   is the closure of $C_0^\infty(\Omega)$ with respect to the norm in $W^1_{p,w}(\Omega).$
\end{definition}
\begin{remark}\label{rem1}\em
The density of the $C_0^\infty$ functions in the weighted Lebesgue space $L_{p,w}$ is proved in
\cite[Chapter~3, Theorem~3.11]{Sob}.
\end{remark}

\section{Sublinear operators   generated by singular integrals  in $M_{p,\varphi}(w)$}\label{sec3a}

 Let  $T$ be a  sub-linear operator. Suppose  that $T$ satisfy 
\begin{equation}\label{subl1}
|Tf(x)|\le C \int_{\R^n}
\frac{|f(y)|}{|x-y|^{n}} \,dy
\end{equation}
  for any
$f\in L_1(\R^n)$ with compact support and $x\notin \supp f.$ 

The next  results   generalize some estimates obtained in  \cite{GulDoc, GulJIA, GULAKShIEOT2012, GulKarMustSer,  KarGulSer}.  The proof is  as in \cite{GULAKShIEOT2012} and  makes use of the  boundedness of the weighted Hardy operator
$$
H^*_\psi g(r):= \int_r^\infty\, g(t)\psi(t)\, dt, \qquad 0<r<\infty\,.
$$
\begin{theorem}(\cite{GulAJM2013, GulJMS2013})\label{Hardy}
Suppose that $v_1, v_2,$ and $\psi$ are weights on $\R_+.$  Then the inequality
\begin{equation}\label{eqH1}
\ess_{r>0} v_2(r)H^*_\psi g(r) \leq C \ess_{r>0 } v_1(r) g(r)
\end{equation}
holds with some $C>0$ for all non-negative and nondecreasing $g$ on $\R_+$ if and only if
\begin{equation}\label{eqH2}
B:=\ess_{r>0} v_2(r)\int_r^\infty\,  \frac{\psi (t)}{\ess_{t<s<\infty}v_1(s)}\, dt<\infty
\end{equation}
and $C=B$ is the best constant in  \eqref{eqH1}.
\end{theorem}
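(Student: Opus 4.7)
The plan is to establish the two implications separately while simultaneously identifying the sharp constant as $C=B$. The common thread is the non-increasing ``upper envelope''
\[
V_1(t):=\ess_{t<s<\infty} v_1(s),
\]
whose reciprocal $1/V_1$ is a natural non-decreasing test function. A preliminary observation used in both directions is the a.e.\ pointwise comparison $v_1(r)\le V_1(r)$: it follows from the very definition of the essential supremum, a countable exhaustion by rationals $q_n\uparrow r$, and the fact that the monotone function $V_1$ has at most countably many discontinuities, so that $V_1(r^-)=V_1(r)$ a.e.

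For the sufficiency \eqref{eqH2}$\Rightarrow$\eqref{eqH1}, set $A:=\ess_{r>0}v_1(r)g(r)$ and assume $A<\infty$ (otherwise the inequality is vacuous). Since $g$ is non-decreasing, $g(t)v_1(s)\le g(s)v_1(s)\le A$ for a.e.\ $s>t$, and taking the ess sup in $s\in(t,\infty)$ produces the pointwise bound
\[
g(t)\le \frac{A}{V_1(t)}\qquad\text{for a.e. } t>0.
\]
Inserting this into the definition of $H^*_\psi g$, multiplying by $v_2(r)$, and taking the ess sup over $r>0$ immediately delivers \eqref{eqH1} with constant $B$. In particular the optimal constant satisfies $C\le B$.

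For the converse, I would test \eqref{eqH1} against the non-decreasing function $g_0(t):=1/V_1(t)$ (non-decreasing because $V_1$ is non-increasing). Using the preliminary comparison, $v_1(r)g_0(r)\le 1$ a.e., so $\ess_r v_1(r) g_0(r)\le 1$; substituting $g_0$ into \eqref{eqH1} with constant $C$ and recognizing the resulting left-hand side as precisely $B$ gives $B\le C$. Combined with the sufficiency this yields $C=B$ as the sharp constant. Admissibility of $g_0$ (which may be infinite) is handled by the truncations $g_N(t):=\min(g_0(t),N)$, which remain non-decreasing and still satisfy $v_1 g_N\le 1$ a.e.; the passage $N\to\infty$ is accomplished by monotone convergence on the left-hand side.

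The main obstacle I anticipate is the rigorous verification of the pointwise comparison $v_1(r)\le V_1(r)$ a.e., because the essential supremum in $V_1(r)$ is taken over the open half-line $(r,\infty)$ and does not a priori control the value of $v_1$ at the endpoint $r$ itself. The countable-cut-off argument sketched above is short but somewhat delicate; likewise, the degenerate cases where $V_1\equiv 0$ or $V_1\equiv\infty$ on a half-line need a brief separate treatment, but each reduces the statement to a trivial subcase in which both sides of \eqref{eqH1} either vanish or become infinite simultaneously.
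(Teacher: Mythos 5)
Your argument is correct and complete. Note that the paper itself offers no proof of this theorem---it is imported verbatim from \cite{GulAJM2013, GulJMS2013}---so there is nothing internal to compare against; your route (using monotonicity of $g$ to get $g(t)\,\ess_{t<s<\infty}v_1(s)\le\ess_{s>0}v_1(s)g(s)$ for sufficiency, then testing with truncations of the reciprocal envelope $1/\ess_{t<s<\infty}v_1(s)$ for necessity and sharpness $C=B$) is the standard one for such supremal Hardy-type characterizations, cf.\ \cite{CarPickSorStep}. The one genuinely delicate step, the a.e.\ inequality $v_1(r)\le\ess_{r<s<\infty}v_1(s)$, you correctly single out, and your countable-rational-cutoff argument for it (together with the at-most-countably-many discontinuities of the monotone envelope) is sound, as is the passage $N\to\infty$ by monotone convergence in the truncation step.
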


\begin{theorem}\label{Tbound}
Let $1< p<\infty$, $w \in A_{p}$ and the pair  $(\varphi_1,\varphi_2)$
satisfy
\begin{equation}\label{condition1}
\int_{r}^{\infty} \frac{\es_{t<s<\infty} \varphi_1(x,s)
w(\B_s(x))^{\frac{1}{p}}}{w(\B_t(x))^{\frac{1}{p}}} \, \frac{dt}{t} \le
 C \, \varphi_2(x,r),
\end{equation}
and  $T$ be a sub-linear operator satisfying   \eqref{subl1}.  If $T$ is   bounded on $L_{p,w}$
and  $\|Tf\|_{p,w}\leq C [w]_{A_p}^{\frac1p} \|f\|_{p,w}$, then $T$ is bounded from $M_{p,\varphi_1}(w)$ to $M_{p,\varphi_2}(w)$ and
\begin{equation}\label{eqTbound}
\|T f\|_{p,\varphi_2,w}\leq C [w]_{A_p}^{\frac{1}{p}} \|f\|_{p,\varphi_1,w}
\end{equation}
with a constant independent of $f.$
\end{theorem}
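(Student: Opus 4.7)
The plan is to follow the classical Chiarenza--Frasca/Guliyev decomposition of the ball, treating a local part via the assumed $L_{p,w}$-boundedness of $T$ and a global part via the kernel estimate \eqref{subl1}, and then convert the resulting estimate into a Hardy-type inequality so that Theorem~\ref{Hardy} can be applied with the weight pair coming from \eqref{condition1}.

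Fix a ball $\B=\B_r(x_0)$ and split $f=f_1+f_2$ with $f_1=f\chi_{2\B}$ and $f_2=f\chi_{(2\B)^c}$. By sub-linearity $|Tf|\le |Tf_1|+|Tf_2|$. For the local piece the assumed bound $\|T g\|_{p,w}\le C[w]_{A_p}^{1/p}\|g\|_{p,w}$ yields $\|Tf_1\|_{L_{p,w}(\B)}\le C[w]_{A_p}^{1/p}\|f\|_{L_{p,w}(2\B)}$. For the global piece, I would use the geometric fact that $|x-y|\sim |x_0-y|$ whenever $x\in\B$ and $y\in(2\B)^c$, so that \eqref{subl1} gives
$$
|Tf_2(x)|\le C\int_{(2\B)^c}\frac{|f(y)|}{|x_0-y|^n}\,dy \le C\int_{2r}^{\infty}\!\!\Bigl(\int_{\B_t(x_0)}|f(y)|\,dy\Bigr)\frac{dt}{t^{n+1}},
$$
where I have written $|x_0-y|^{-n}=n\int_{|x_0-y|}^{\infty}t^{-n-1}\,dt$ and used Fubini.

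Next I would absorb the weight: apply H\"older with exponents $p,p'$ relative to $w$ and $w^{1-p'}$ to get $\int_{\B_t}|f|\,dy\le \|f\|_{L_{p,w}(\B_t)}\|w^{-1/p}\|_{L_{p'}(\B_t)}$, and then invoke \eqref{norma-w} to bound $\|w^{-1/p}\|_{L_{p'}(\B_t)}\le [w]_{A_p}^{1/p}|\B_t|w(\B_t)^{-1/p}$. Since $|\B_t|\sim t^n$, this yields the pointwise bound
$$
|Tf_2(x)|\le C[w]_{A_p}^{1/p}\int_{2r}^{\infty}\|f\|_{L_{p,w}(\B_t(x_0))}\,w(\B_t(x_0))^{-1/p}\,\frac{dt}{t},
$$
uniformly in $x\in\B$. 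Taking the $L_{p,w}(\B)$-norm and multiplying by $w(\B)^{1/p}$ gives the same integral expression times $w(\B_r(x_0))^{1/p}$. To put the local part in the same form, I would use the routine observation that, by the doubling property \eqref{Ap}(4) of $w$ and the identity $\int_{2r}^{4r}dt/t=\ln 2$, one has $\|f\|_{L_{p,w}(2\B)}\lesssim w(\B_r(x_0))^{1/p}\int_{2r}^{\infty}\|f\|_{L_{p,w}(\B_t)}w(\B_t)^{-1/p}\,dt/t$. Combining both contributions produces
$$
w(\B_r(x_0))^{-1/p}\,\|Tf\|_{L_{p,w}(\B_r(x_0))}\le C[w]_{A_p}^{1/p}\int_{2r}^{\infty}\|f\|_{L_{p,w}(\B_t(x_0))}\,w(\B_t(x_0))^{-1/p}\,\frac{dt}{t}.
$$

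Finally I would recognize the right-hand side as a weighted Hardy operator $H^{*}_{\psi}g(r)$ acting on the non-decreasing function $g(t):=\|f\|_{L_{p,w}(\B_t(x_0))}$, with $\psi(t):=w(\B_t(x_0))^{-1/p}/t$. Multiplying by $\varphi_2(x_0,r)^{-1}$ and taking $\sup_{x_0,r}$, I would apply Theorem~\ref{Hardy} with $v_2(r)=\varphi_2(x_0,r)^{-1}$ and $v_1(s)=[\varphi_1(x_0,s)w(\B_s(x_0))^{1/p}]^{-1}$, so that $\sup_s v_1(s)g(s)\le\|f\|_{p,\varphi_1,w}$; the constant $B$ in \eqref{eqH2} then coincides exactly with the left-hand side of the hypothesis \eqref{condition1} and is therefore finite. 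This delivers \eqref{eqTbound}. The main obstacle is the careful choice of $v_1,v_2,\psi,g$ so that the Hardy condition \eqref{eqH2} matches the supremal condition \eqref{condition1}; this matching is what motivates having the factor $w(\B_t(x_0))^{-1/p}$ hidden inside $\psi$ rather than inside $g$, since only $g$ has to be monotone.
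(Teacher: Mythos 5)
Your proposal is correct and follows essentially the same route as the paper: the paper defers the proof of Theorem~\ref{Tbound} to \cite{GULAKShIEOT2012} but carries out exactly this argument for the nonsingular analogue in Lemma~\ref{lem3.3.} and Theorem~\ref{3.4.} --- the $f=f_1+f_2$ splitting, the $L_{p,w}$-boundedness for the local piece, the kernel estimate plus H\"older and \eqref{norma-w} for the far piece, the reverse-Hardy absorption of $\|f\|_{L_{p,w}(2\B)}$ as in \eqref{sal01}, and then Theorem~\ref{Hardy} with the identical choices of $v_1,v_2,\psi,g$. Your observation about why $w(\B_t)^{-1/p}$ must sit in $\psi$ rather than in $g$ (monotonicity of $g$) is exactly the point that makes the matching with \eqref{condition1} work.
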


For any $a\in BMO$ consider the commutator $T_af=aTf -T(af).$  Let $T_a$ be a sub-linear operator satisfying 
\begin{equation}\label{subl2}
|T_a f(x)|\leq C \int_{\R^n}|a(x)-a(y)| \,  \frac{|f(y)|}{|x-y|^n}\, dy
\end{equation}
 for any $f\in L_1(\R^n)$ with a compact support and
$x\not\in \supp f.$   Suppose in addition that $T_a$ is bounded in $L_{p,w}$  and 
satisfies   $\|T_af\|_{p,w} \leq C \|a\|_* [w]_{A_p}^{\frac1p} \|f\|_{p,w}.$  Then the next  result is valid and the prood is  as in \cite{GULAKShIEOT2012}, making use  of Theorem~\ref{Hardy}.
\begin{theorem}\label{TAbound}
Let $p\in(1,\infty),$ $w\in A_p,$ $a\in BMO$  and the pair  $(\varphi_1, \varphi_2)$ satisfy
\begin{equation}\label{condition2}
\int_{r}^{\infty}\left(1+\ln\frac{t}r\right) \frac{\es_{t<s<\infty} \varphi_1(x,s)
w(\B_s(x))^{\frac{1}{p}}}{w(\B_t(x))^{\frac{1}{p}}} \, \frac{dt}{t} \le
 C \, \varphi_2(x,r)
\end{equation}
with a constant independent on $x$ and $r.$ Suppose that $T_a$  is bounded in $L_{p,w}$ and  satisfies  \eqref{subl2}. 
Then $T_a$ is bounded  from $M_{p,\varphi_1}(w)$ to $M_{p,\varphi_2}(w)$ and
\begin{equation}\label{eqTa_bound}
\|T_a f\|_{p,\varphi_2,w}\leq C [w]_{A_p}^{\frac{1}{p}} \|a\|_* \|f\|_{p,\varphi_1,w}\,.
\end{equation}
\end{theorem}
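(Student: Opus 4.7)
The plan is to follow the blueprint of Theorem~\ref{Tbound}, replacing the $L_{p,w}$-estimate of $T$ by that of the commutator $T_a$ and tracking the extra logarithmic factor that BMO oscillation produces. Fix $x_0\in\R^n$, $r>0$, set $B=\B_r(x_0)$, and decompose $f=f_1+f_2$ with $f_1=f\chi_{2B}$. By the hypothesized $L_{p,w}$-boundedness of $T_a$,
\[
\|T_a f_1\|_{L_{p,w}(B)}\le C\|a\|_{*}[w]_{A_p}^{1/p}\|f\|_{L_{p,w}(2B)},
\]
and via doubling of $w$ (property (4)) this can be reinserted into the upcoming integral term over $t\in(r,2r)$, so the remaining work is to control $T_a f_2$.

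Next I use \eqref{subl2} and split $a(x)-a(y)=(a(x)-a_{2B})-(a(y)-a_{2B})$ to write, for $x\in B$,
\[
|T_a f_2(x)|\le C|a(x)-a_{2B}|\,I_1(x)+C\,I_2(x),
\]
where $I_1(x)=\int_{(2B)^c}|x-y|^{-n}|f(y)|\,dy$ and $I_2(x)=\int_{(2B)^c}|a(y)-a_{2B}|\,|x-y|^{-n}|f(y)|\,dy$. The standard identity $|x-y|^{-n}=n\int_{|x-y|}^{\infty}t^{-n-1}dt$ and Fubini, together with the equivalence $\B_t(x)\sim \B_t(x_0)$ for $x\in B$, $t\ge 2r$, convert both $I_1$ and $I_2$ into Hardy-type integrals in $t$ over $(2r,\infty)$. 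Hölder's inequality combined with \eqref{norma-w} bounds $\|w^{-1/p}\|_{L_{p'}(\B_t)}$ by $[w]_{A_p}^{1/p}|\B_t|/w(\B_t)^{1/p}$, yielding
\[
I_1(x)\le C[w]_{A_p}^{1/p}\int_{2r}^{\infty}\|f\|_{L_{p,w}(\B_t)}\,\frac{dt}{w(\B_t)^{1/p}\,t}.
\]

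The main obstacle is $I_2$: writing $|a(y)-a_{2B}|\le|a(y)-a_{\B_t}|+|a_{\B_t}-a_{2B}|$, the second piece yields $C\|a\|_{*}\ln(t/r)$ via \eqref{propBMO}, which is the source of the $(1+\ln(t/r))$ factor in \eqref{condition2}. For the first piece I apply Hölder with the weights $w$ and $w^{1-p'}$ and invoke \eqref{EMJ} to get
\[
\int_{\B_t}|a(y)-a_{\B_t}|\,|f(y)|\,dy\le C[w]_{A_p}^{1/p}\|a\|_{*}\,w^{1-p'}(\B_t)^{1/p'}\|f\|_{L_{p,w}(\B_t)},
\]
and then bound $w^{1-p'}(\B_t)^{1/p'}$ again by \eqref{norma-w}. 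Collecting everything,
\[
I_2(x)\le C[w]_{A_p}^{1/p}\|a\|_{*}\int_{2r}^{\infty}\Bigl(1+\ln\tfrac{t}{r}\Bigr)\,\|f\|_{L_{p,w}(\B_t)}\,\frac{dt}{w(\B_t)^{1/p}\,t}.
\]
Integrating the $|a(x)-a_{2B}|I_1$ term over $B$ with respect to $w\,dx$ and applying \eqref{weightBMO} produces the same $\|a\|_{*}w(B)^{1/p}$ prefactor (without the log, which is weaker than $1+\ln(t/r)$).

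Combining all three contributions and dividing by $w(B)^{1/p}$ gives the local estimate
\[
w(B)^{-1/p}\|T_a f\|_{L_{p,w}(B)}\le C[w]_{A_p}^{1/p}\|a\|_{*}\int_{r}^{\infty}\Bigl(1+\ln\tfrac{t}{r}\Bigr)\,\|f\|_{L_{p,w}(\B_t)}\,\frac{dt}{w(\B_t)^{1/p}\,t}.
\]
Finally I take $\varphi_2(x_0,r)^{-1}$ times this inequality, observe that the right-hand side has the shape $H^{*}_{\psi}g$ with $\psi(t)=(1+\ln(t/r))/t$ and $g(t)=\|f\|_{L_{p,w}(\B_t)}/w(\B_t)^{1/p}$ nondecreasing in $t$, and invoke Theorem~\ref{Hardy} with $v_1=\varphi_1(x_0,\cdot)^{-1}w(\B_\cdot(x_0))^{-1/p}\cdot w(\B_\cdot)^{1/p}$ arranged so that condition \eqref{eqH2} becomes precisely \eqref{condition2}. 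Taking the supremum over $x_0$ and $r$ delivers \eqref{eqTa_bound}.
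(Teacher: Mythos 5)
Your argument is correct and follows essentially the same route as the paper: the paper proves Theorem~\ref{TAbound} by reference to the method of \cite{GULAKShIEOT2012} via Theorem~\ref{Hardy}, and its detailed model is the nonsingular analogue in Lemma~\ref{lem5.1.} and Theorem~\ref{theor3.3F}, which uses exactly your decomposition $f=f_1+f_2$, the splitting of $a(x)-a(y)$ through ball averages, \eqref{propBMO}, \eqref{EMJ}, \eqref{weightBMO}, and the weighted Hardy operator. The only point to tidy is the final invocation of Theorem~\ref{Hardy}: take $g(t)=\|f\|_{p,w;\B_t(x_0)}$ (which is genuinely nondecreasing) and absorb $w(\B_t(x_0))^{-1/p}$ into $\psi$, rather than declaring $g(t)=\|f\|_{p,w;\B_t(x_0)}/w(\B_t(x_0))^{1/p}$ nondecreasing, which it need not be.
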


\section{ Sublinear  operators generated by nonsingular integrals in  $M_{p,\varphi}(w)$}\label{sec3}
\setcounter{theorem}{0}
\setcounter{definition}{0}
\setcounter{lemma}{0}
\setcounter{corollary}{0}

For any $x\in \R^n_+$ define  $\tl x=(x_1,\ldots,x_{n-1},-x_n).$
Let $\wtl{T}$ be a sub-linear operator with a nonsingular kernel. Suppose that $\wtl{T}$ satisfy the condition 
\begin{equation}\label{tlT}
|\wtl Tf(x)|\leq C \int_{\R^n_+}\frac{|f(y)|}{|\tl x-y|^n}\,dy
\end{equation}
 for any $f\in L_1(\R^n_+)$
with a compact support. 
\begin{lemma} \label{theorXS} \label{lem3.3.}
Let $w \in A_{p}$, $p\in(1,\infty)$, the operator $\wtl T$  satisfy   \eqref{tlT} and $\wtl T$ is bounded on $L_{p,w}(\R^n_+)$. Let also for any  fixed  $x_0\in \R^n_+$ and for any  $f\in L_{p,w}^{\rm loc}(\R^n_+)$
\begin{equation}\label{eq3.5.nm}
\int_r^\infty   w(\B_t^+(x_0))^{-\frac1p}  \|f\|_{p,w;\B_t^+(x_0)} \, \frac{dt}{t}<\i \,.
\end{equation}
Then
\begin{equation}\label{eq3.5.}
\|\wtl T f\|_{p,w;\B_r^+(x_0)} \le C [w]_{A_p}^{\frac{1}{p}} \, 
w(\B_r^+(x_0))^{\frac{1}{p}} \int_{2r}^{\i} w(\B_t^+(x_0))^{-\frac{1}{p}} \|f\|_{p,w;\B_t^+(x_0)}  \, \frac{dt}{t}
\end{equation}
with a  constant independent of  $x_0$, $r$, and $f$.
\end{lemma}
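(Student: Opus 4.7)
The plan is to imitate the classical Morrey-type decomposition in the half-space setting. Split $f = f_1 + f_2$ with $f_1 := f\chi_{2\B_r^+(x_0)}$ and $f_2 := f - f_1$, so that by sublinearity
$$\|\wtl T f\|_{p,w;\B_r^+(x_0)} \le \|\wtl T f_1\|_{p,w;\B_r^+(x_0)} + \|\wtl T f_2\|_{p,w;\B_r^+(x_0)},$$
and treat the two pieces separately. Throughout the argument I would replace full-ball $A_p$ quantities by their half-ball analogues via the comparison $w(\B_r(x_0)) \le 2[w]_{A_p} w(\B_r^+(x_0))$, which follows from $|\B_r^+(x_0)| \ge \tfrac12|\B_r(x_0)|$ (valid for every $x_0\in\R^n_+$) together with property (5) of the $A_p$ weights; combined with the full-ball doubling (item (4)), this yields the half-space doubling $w(2\B_r^+(x_0)) \le C w(\B_r^+(x_0))$.

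For the local piece, the assumed boundedness of $\wtl T$ on $L_{p,w}(\R^n_+)$ gives $\|\wtl T f_1\|_{p,w;\B_r^+(x_0)} \le C[w]_{A_p}^{1/p}\|f\|_{p,w;2\B_r^+(x_0)}$. I would then exploit the identity $\int_{2r}^{4r}\frac{dt}{t}=\ln 2$: since $t\mapsto \|f\|_{p,w;\B_t^+(x_0)}$ is nondecreasing and the half-space doubling makes $w(\B_t^+(x_0))$ comparable to $w(\B_r^+(x_0))$ on $[2r,4r]$, integrating these facts gives
$$\|f\|_{p,w;2\B_r^+(x_0)}\le C\,w(\B_r^+(x_0))^{1/p}\int_{2r}^{\infty} w(\B_t^+(x_0))^{-1/p}\|f\|_{p,w;\B_t^+(x_0)}\,\frac{dt}{t},$$
which bounds the local contribution by the right-hand side of \eqref{eq3.5.}.

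For the global piece, for each $x\in\B_r^+(x_0)$ the reflection inequality $|\tl x-y|\ge|x-y|$, combined with $|x-y|\ge|y-x_0|/2$ whenever $|y-x_0|\ge 2r$ and $|x-x_0|<r$, turns \eqref{tlT} into the pointwise estimate
$$|\wtl T f_2(x)| \le C\int_{\{y\in\R^n_+\,:\,|y-x_0|>2r\}} \frac{|f(y)|}{|y-x_0|^n}\,dy.$$
Writing $|y-x_0|^{-n}=n\int_{|y-x_0|}^{\infty}t^{-n-1}dt$ and using Fubini reduce the right-hand side to $C\int_{2r}^{\infty} t^{-n-1}\bigl(\int_{\B_t^+(x_0)}|f(y)|\,dy\bigr)\,dt$. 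H\"older's inequality with weight $w$ and estimate \eqref{norma-w} applied to the full ball $\B_t(x_0)\supset \B_t^+(x_0)$ give $\int_{\B_t^+(x_0)}|f|\le [w]_{A_p}^{1/p}|\B_t(x_0)|\,w(\B_t(x_0))^{-1/p}\|f\|_{p,w;\B_t^+(x_0)}$, and since $|\B_t(x_0)|=Ct^n$ and $w(\B_t(x_0))^{-1/p}\le w(\B_t^+(x_0))^{-1/p}$, one obtains the uniform (in $x\in\B_r^+(x_0)$) bound
$$|\wtl T f_2(x)|\le C[w]_{A_p}^{1/p}\int_{2r}^{\infty}w(\B_t^+(x_0))^{-1/p}\|f\|_{p,w;\B_t^+(x_0)}\,\frac{dt}{t}.$$
Multiplying by $w(\B_r^+(x_0))^{1/p}$ and combining with the local bound yields \eqref{eq3.5.}. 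The only delicate point is the half-ball bookkeeping set up in the first paragraph; once that is in place, the rest is a direct adaptation of the argument behind Theorem~\ref{Tbound} to the nonsingular reflected kernel.
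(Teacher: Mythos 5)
Your proof is correct and follows essentially the same route as the paper's: the decomposition $f=f_1+f_2$ over $2\B_r^+(x_0)$, the $L_{p,w}$-boundedness for the local piece, the kernel comparison $|\tl x-y|\gtrsim|x_0-y|$ plus Fubini and H\"older with \eqref{norma-w} for the far piece, and a monotonicity argument to absorb $\|f\|_{p,w;2\B_r^+(x_0)}$ into the tail integral. The only (harmless) deviations are that you carry out the last absorption via half-ball doubling on the annulus $[2r,4r]$ instead of the paper's identity $|\B_r^+|\int_{2r}^{\infty}t^{-n-1}\,dt=C$ as in \eqref{sal01}, and that you make the half-ball versus full-ball bookkeeping explicit where the paper leaves it implicit.
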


\begin{proof}
Consider the decomposition
 $f=f_1+f_2$ with
$f_1=f\chi_{2\B_r^+(x_0)}$ and $f_2=f\chi_{(2\B_r^+(x_0))^c}$.
Because of the boundedness of $\wtl T$ in  $L_{p,w}(\R^n_+)$  we have
as in \cite{GulSoft1}
\begin{equation*}
\|\wtl T f_1\|_{p,w;\B_r^+(x_0)}\leq C [w]_{A_p}^{\frac{1}{p}} \|f\|_{p,w;2\B_r^+(x_0)}\,.
\end{equation*}
Since for any   $\tl x \in \B_r^+(x_0)$ and  $y\in  (2\B_r^+(x_0))^c$  it holds
\begin{equation}\label{xy}
\frac{1}{2}|x_0-y|\le |\tl x-y| \le \frac{3}{2}|x_0-y|.
\end{equation}
we get as in \cite{GulSoft1}
$$
|\wtl T f_2(x)|\leq  C \int_{2r}^\i \left(\int_{\B_t^+(x_0)}|f(y)|dy  \right)\frac{dt}{t^{n+1}}.
$$
Making use of the  H\"older  inequality and \eqref{norma-w}  we get
\begin{equation} \label{sal00}
\begin{split}
|\wtl T f_2(x)| & \leq  C \int_{2r}^{\i}\|f\|_{p,w;\B_t^+(x_0)} \, \|w^{-\frac1p}\|_{p';\B_t^+(x_0)} \, \frac{dt}{t^{n+1}}
\\
& \le C [w]_{A_{p}}^{\frac1p} \, \int_{2r}^{\i}  w(\B_t^+(x_0))^{-\frac1p} \|f\|_{p,w;\B_t^+(x_0)} \, \frac{dt}{t}.
\end{split}
\end{equation}
Direct calculations give
\begin{equation}
\|\wtl T f_2\|_{p,w;\B_r^+(x_0)}  \leq  C [w]_{A_{p}}^{\frac1p} \, w(\B_r^+(x_0))^{\frac1p}  \int_{2r}^{\i}\frac{\|f\|_{p,w;\B_t^+(x_0)}}{ w(\B_t^+(x_0))^{\frac1p}}\,  \frac{dt}{t}
\end{equation}
for all $f\in L_{p,w}(\R^n_+)$   satisfying \eqref{eq3.5.nm}.
Thus,
\begin{align}\label{Tf}
\nonumber
\|\wtl T f\|_{p,w;\B_r^+(x_0)}  &   \leq  \|\wtl T f_1\|_{p,w;\B_r^+(x_0)}+\|\wtl Tf_2\|_{p,w;\B_r^+(x_0)} \\
& \leq C  [w]_{A_{p}}^{\frac1p}  \|f\|_{p,w;2\B_r^+(x_0)}\\
\nonumber
& +  C [w]_{A_{p}}^{\frac1p} w(\B_r^+(x_0))^{\frac1p} \, \int_{2r}^{\i}\frac{\|f\|_{p,w;\B_t^+(x_0)} }{   w(\B_t^+(x_0))^{\frac1p}}  \,\frac{dt}{t}\, .
\end{align}

On the other hand, by \eqref{norma-w}
\begin{align}  \label{sal01}
\nonumber
\|f\|_{p,w;2\B_r^+(x_0)} & \le C |\B_r^+(x_0)| \|f\|_{p,w;2\B_r^+(x_0)}
\int_{2r}^{\i}\,\frac{dt}{t^{n+1}} \\
\nonumber
& \le C |\B_r^+(x_0)| \, \int_{2r}^{\i}\|f\|_{p,w;\B_t^+(x_0)}\,\frac{dt}{t^{n+1}} \\
\nonumber
& \le C[w]_{A_p}^{-\frac1p} w(\B_r^+(x_0))^{\frac1p} \, \int_{2r}^{\i}\|f\|_{p,w;\B_t^+(x_0)} \, \|w^{-\frac1p}\|_{p';\B_t^+(x_0)} \, \frac{dt}{t^{n+1}} \\
\nonumber
& \le C [w]_{A_{p}}^{-\frac1p}  w(\B_r^+(x_0))^{\frac1p} \, \int_{2r}^{\i} [w]_{A_p}^{\frac1p} w(\B_t^+(x_0))^{-\frac1p}  \|f\|_{p,w;\B_t^+(x_0)}  \, \frac{dt}{t}\\
& \leq  w(\B_r^+(x_0))^{\frac1p} \, \int_{2r}^{\i}  w(\B_t^+(x_0))^{-\frac1p}  \|f\|_{p,w;\B_t^+(x_0)}  \, \frac{dt}{t}
\end{align}
which unified with \eqref{Tf} gives \eqref{eq3.5.}.
\end{proof}

\begin{theorem}\label{3.4.}
Suppose that   $w \in A_{p},$  $p\in(1,\infty),$  the pair  $(\varphi_1,\varphi_2)$   satisfies the condition \eqref{condition1} for any $x\in \R^n_+$  and \eqref{tlT} holds. Then if  $\wtl T$ is bounded in $L_{p,w}(\R^n_+),$  then it is bounded from
$M_{p,\varphi_1}(\R^n_+,w)$ in $M_{p,\varphi_2}(\R^n_+,w)$ and
\begin{equation}\label{normTf}
\|\wtl T f\|_{p,\varphi_2,w;\R^n_+} \leq C [w]_{A_p}^{\frac{1}{p}} \|f\|_{p,\varphi_1,w;\R^n_+}
\end{equation}
with a constant independent of $f.$
\end{theorem}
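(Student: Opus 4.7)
}

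The plan is to combine the local pointwise estimate from Lemma~\ref{lem3.3.} with the weighted Hardy-operator inequality from Theorem~\ref{Hardy}. First I would fix $x_{0}\in\R^{n}_{+}$ and $r>0$ and invoke Lemma~\ref{lem3.3.}, dividing both sides by $\varphi_{2}(x_{0},r)\,w(\B_{r}^{+}(x_{0}))^{1/p}$ to obtain
\begin{equation*}
\varphi_{2}(x_{0},r)^{-1}\,w(\B_{r}^{+}(x_{0}))^{-\frac{1}{p}}\,\|\wtl T f\|_{p,w;\B_{r}^{+}(x_{0})}
\le C[w]_{A_{p}}^{\frac{1}{p}}\,\varphi_{2}(x_{0},r)^{-1}\int_{2r}^{\infty}\frac{\|f\|_{p,w;\B_{t}^{+}(x_{0})}}{w(\B_{t}^{+}(x_{0}))^{\frac{1}{p}}}\,\frac{dt}{t}.
\end{equation*}
The left-hand side, after taking the supremum in $r>0$, is precisely the ingredient of the $M_{p,\varphi_{2}}(\R^{n}_{+},w)$-norm at the point $x_{0}$, so it suffices to bound the integral on the right uniformly (in $x_{0},r$) by $\|f\|_{p,\varphi_{1},w;\R^{n}_{+}}$.

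Next I would recognize the right-hand side as a weighted Hardy operator acting on the non-decreasing function $g(t):=\|f\|_{p,w;\B_{t}^{+}(x_{0})}$. Set
\begin{equation*}
v_{1}(t):=\varphi_{1}(x_{0},t)^{-1}\,w(\B_{t}^{+}(x_{0}))^{-\frac{1}{p}},\qquad v_{2}(r):=\varphi_{2}(x_{0},r)^{-1},\qquad \psi(t):=\frac{1}{w(\B_{t}^{+}(x_{0}))^{\frac{1}{p}}\,t}.
\end{equation*}
Then by definition of $M_{p,\varphi_{1}}(\R^{n}_{+},w)$ we have $\ess_{t>0}v_{1}(t)g(t)\le\|f\|_{p,\varphi_{1},w;\R^{n}_{+}}$, and the previous integral equals $H^{\ast}_{\psi}g(r)$ (after enlarging the lower limit from $2r$ to $r$, which only increases it). The pointwise identity
$\bigl(\ess_{t<s<\infty}v_{1}(s)\bigr)^{-1}=\es_{t<s<\infty}\varphi_{1}(x_{0},s)\,w(\B_{s}^{+}(x_{0}))^{\frac{1}{p}}$
shows that the constant $B$ in condition~\eqref{eqH2} is exactly
\begin{equation*}
B=\ess_{r>0}\varphi_{2}(x_{0},r)^{-1}\int_{r}^{\infty}\frac{\es_{t<s<\infty}\varphi_{1}(x_{0},s)\,w(\B_{s}^{+}(x_{0}))^{\frac{1}{p}}}{w(\B_{t}^{+}(x_{0}))^{\frac{1}{p}}}\,\frac{dt}{t},
\end{equation*}
which is bounded by a constant independent of $x_{0}$ thanks to assumption~\eqref{condition1} (in its natural half-ball form on $\R^{n}_{+}$). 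Applying Theorem~\ref{Hardy} with this choice of weights gives
\begin{equation*}
\ess_{r>0}\varphi_{2}(x_{0},r)^{-1}\int_{r}^{\infty}\frac{\|f\|_{p,w;\B_{t}^{+}(x_{0})}}{w(\B_{t}^{+}(x_{0}))^{\frac{1}{p}}}\,\frac{dt}{t}\le C\|f\|_{p,\varphi_{1},w;\R^{n}_{+}}.
\end{equation*}

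Finally I would assemble the two steps, take the supremum over $x_{0}\in\R^{n}_{+}$ and $r>0$, and obtain \eqref{normTf}. Before concluding, one must verify that the finiteness hypothesis~\eqref{eq3.5.nm} required in Lemma~\ref{lem3.3.} is automatic for $f\in M_{p,\varphi_{1}}(\R^{n}_{+},w)$: using $\|f\|_{p,w;\B_{t}^{+}(x_{0})}\le\varphi_{1}(x_{0},t)w(\B_{t}^{+}(x_{0}))^{1/p}\|f\|_{p,\varphi_{1},w}$ and \eqref{condition1} one sees the defining integral is controlled by $C\varphi_{2}(x_{0},r)\|f\|_{p,\varphi_{1},w}<\infty$.

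The main subtlety, and the step where I would be most careful, is the translation of condition~\eqref{condition1} (written for full balls $\B_{s}(x)$) into the half-ball version used above, and the correct identification of the ess~inf/ess~sup pair in Theorem~\ref{Hardy}; once the reciprocal identity for $v_{1}$ is in place the whole argument is routine. Everything else---monotonicity of $g(t)$, independence of the constant on $x_{0}$, replacing the integral lower limit $2r$ by $r$---is straightforward.
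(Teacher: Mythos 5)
Your proposal follows the paper's own proof essentially verbatim: apply Lemma~\ref{lem3.3.}, then invoke Theorem~\ref{Hardy} with exactly the choices $v_1(t)=\varphi_1(x,t)^{-1}w(\B_t^+(x))^{-\frac1p}$, $v_2(r)=\varphi_2(x,r)^{-1}$, $\psi(t)=w(\B_t^+(x))^{-\frac1p}t^{-1}$, $g(t)=\|f\|_{p,w;\B_t^+(x)}$, with \eqref{condition1} guaranteeing the finiteness of the constant $B$ in \eqref{eqH2}. Your additional remarks (verifying hypothesis \eqref{eq3.5.nm}, the half-ball reading of \eqref{condition1}, and the reciprocal ess-inf/ess-sup identity) are correct and only make explicit details the paper leaves implicit.
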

\begin{proof}
 By  Lemma~\ref{lem3.3.}  we have
$$
\|\wtl T f\|_{p,\varphi_2,w;\R^n_+}  \leq C [w]_{A_p}^{\frac{1}{p}} \sup_{x\in\R^n_+,\,r>0}\varphi_2(x,r)^{-1} \int_r^{\i}  w(\B_t^+(x))^{-\frac1p} \|f\|_{p,w;\B_t^+(x)}  \, \frac{dt}{t}\,.
$$
Applying the Theorem~\ref{Hardy} with
$$
 v_1(r)=\varphi_1(x,r)^{-1}  w(\B_r^+(x))^{-\frac1p},\qquad v_2(r)= \varphi_2(x,r)^{-1},
$$
$$
\psi(r)= w(\B_r^+(x))^{-\frac1p } r^{-1},\qquad    g(r)= \|f\|_{p,w;\B_r^+(x)}
$$
to the above integral, we get  as in \cite{GulSoft1}
\begin{align*}
\|\wtl T f\|_{p,\varphi_2,w;\R^n_+}&  \leq  C [w]_{A_p}^{\frac{1}{p}}  \sup_{x\in\R^n_+, r>0}\varphi_1(x,r)^{-1} \, w(\B_r^+(x))^{-\frac1p} \,
\|f\|_{p,w;\B_r^+(x)}\\
& = C [w]_{A_p}^{\frac{1}{p}} \|f\|_{p,\varphi_1,w;\R^n_+}.
\end{align*}
\end{proof}

\section{ Commutators of sub-linear  operators generated by nonsingular integrals  in  $M_{p,\varphi}(w)$}\label{sec4}
\setcounter{theorem}{0}
\setcounter{definition}{0}
\setcounter{lemma}{0}
\setcounter{corollary}{0}

For any  $a\in BMO$ consider the  commutator  ${\wtl T}_{a}f= a\wtl T f-\wtl T(af)$
where $\wtl T$ is the nonsingular operator satisfying \eqref{tlT} and $f\in L_1(\R^n_+)$ with a compact support. Suppose that for  $x\notin supp f$
\begin{equation}\label{sublcomm}
|{\wtl T}_{a}f(x)|\le C \int_{\R^n_+} |a(x)-a(y)|\, \frac{|f(y)|}{ |\tl x-y|^n }\, dy,
\end{equation}
where $C$ is independent of $f, a,$   and $x$.

Suppose in addition that ${\wtl T}_a$ is bounded in $L_{p,w}(\R^n_+),$ $w\in A_p,$  $  p\in(1,\infty)$ satisfying the estimate
$\|{\wtl T}_af\|_{p,w;\R^n_+}\leq C \, [w]_{A_{p}}^{\frac1p} \, \|a\|_{\ast} \, \|f\|_{p,w;\R^n_+}$.
 Our aim is to show boundedness of ${\wtl T}_a$ in $M_{p,\varphi}(\R^n_+,w)$.

To estimate the commutator we shall employ the same idea which we
used in the proof of Lemma~\ref{lem3.3.} (see \cite{GulSoft1} for details).
\begin{lemma}\label{lem5.1.}
Let  $w \in A_p,$  $p\in(1,\infty),$ $a \in BMO$ and  ${\wtl T}_{a}$ be a bounded  operator in $L_{p,w}(\R^n_+)$ satisfying
 \eqref{sublcomm} and the estimate
$\|{\wtl T}_{a} f\|_{p,w;\R^n_+}  \leq C  [w]_{A_{p}}^{\frac1p} \|a\|_* \, \|f\|_{p,w;\R^n_+} .$   Suppose that for all
$f\in\Lpwloc+$,  $x_0\in\R^n_+$ and $r>0$ applies  the next  condition 
\begin{equation}\label{condition3}
\int_r^{\i} \Big(1+\ln\frac{t}{r}\Big) \frac{\|f\|_{p,w;\B_t^+(x_0)}}{ w(\B_t^+(x_0))^{\frac1p}} \, \frac{dt}{t}<\i\,.
\end{equation}
Then
\begin{equation}\label{norm-Ta}
 \|{\wtl T}_{a} f\|_{p,w;\B_r^+(x_0)}  \leq C [w]_{A_p}^{\frac{1}{p}} \|a\|_{*} \, w(\B_r^+(x_0))^{\frac1p} \int_{2r}^{\i}  \Big(1+\ln \frac{t}{r}\Big)
\frac{ \|f\|_{p,w;\B_t^+(x_0)}}{ w(\B_t^+(x_0))^{\frac1p}} \, \frac{dt}{t}\,.
\end{equation}
\end{lemma}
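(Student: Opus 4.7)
The strategy parallels the proof of Lemma~\ref{lem3.3.}: decompose $f=f_1+f_2$ with $f_1=f\chi_{2\B_r^+(x_0)}$ and $f_2=f\chi_{(2\B_r^+(x_0))^c}$, and estimate $\wtl T_a f_1$ and $\wtl T_a f_2$ separately. The extra kernel factor $|a(x)-a(y)|$ carried by the commutator is what ultimately produces the logarithmic weight $1+\ln(t/r)$; this logarithm will arise from the oscillation of $a$ between nested balls by way of \eqref{propBMO} together with the weighted BMO estimates \eqref{weightBMO} and \eqref{EMJ}.

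For the local piece $f_1$, the assumed $L_{p,w}(\R^n_+)$-boundedness of $\wtl T_a$ gives
$$
\|\wtl T_a f_1\|_{p,w;\B_r^+(x_0)}\le C[w]_{A_p}^{1/p}\|a\|_{\ast}\|f\|_{p,w;2\B_r^+(x_0)}.
$$
Reproducing the chain of inequalities \eqref{sal01} rewrites the right-hand side as
$$
Cw(\B_r^+(x_0))^{1/p}\int_{2r}^{\i}w(\B_t^+(x_0))^{-1/p}\|f\|_{p,w;\B_t^+(x_0)}\,\frac{dt}{t},
$$
which is absorbed by the target bound \eqref{norm-Ta} since $1+\ln(t/r)\ge 1$.

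For the non-local piece $f_2$, condition \eqref{sublcomm} combined with the equivalence \eqref{xy} and a Fubini rearrangement identical to the one used in Lemma~\ref{lem3.3.} yields the pointwise majorization
$$
|\wtl T_a f_2(x)|\le C\int_{2r}^{\i}\frac{dt}{t^{n+1}}\int_{\B_t^+(x_0)}|a(x)-a(y)|\,|f(y)|\,dy.
$$
Writing $|a(x)-a(y)|\le|a(x)-a_{\B_r^+(x_0)}|+|a_{\B_r^+(x_0)}-a_{\B_t^+(x_0)}|+|a(y)-a_{\B_t^+(x_0)}|$ splits the bound into three pieces $J_1,J_2,J_3$. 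In $J_1$ the factor $|a(x)-a_{\B_r^+(x_0)}|$ pulls out of the $y$-integral, which is handled by H\"older and \eqref{norma-w}; taking the outer $L_{p,w}(\B_r^+(x_0))$-norm and using \eqref{weightBMO} converts the $a$-factor into $\|a\|_{\ast}w(\B_r^+(x_0))^{1/p}$. In $J_2$, the $x$-independent oscillation $|a_{\B_r^+(x_0)}-a_{\B_t^+(x_0)}|$ is controlled by $C\|a\|_{\ast}\ln(t/r)$ via \eqref{propBMO}, which is precisely the source of the logarithmic factor. In $J_3$ the oscillation of $a$ sits on the integration variable $y$, so H\"older with dual weight $w^{1-p'}$ followed by \eqref{EMJ} bounds the inner integral by $C[w]_{A_p}^{1/p}\|a\|_{\ast}w^{1-p'}(\B_t^+(x_0))^{1/p'}\|f\|_{p,w;\B_t^+(x_0)}$; a further application of \eqref{norma-w} replaces $w^{1-p'}(\B_t^+(x_0))^{1/p'}$ by a constant multiple of $|\B_t^+(x_0)|w(\B_t^+(x_0))^{-1/p}$, whose factor $|\B_t^+(x_0)|\sim t^n$ cancels the $t^{-n-1}$ in $dt/t^{n+1}$.

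The main obstacle is the bookkeeping for $J_3$, where the BMO oscillation of $a$ is internal to the $y$-integration and must be moved through the dual $A_{p'}$ inequality \eqref{EMJ} rather than \eqref{weightBMO}; this is also the step that keeps the $A_p$-characteristic dependence at the level $[w]_{A_p}^{1/p}$, the extra power produced by combining \eqref{EMJ} with \eqref{norma-w} being absorbed into $C$. Summing $J_1,J_2,J_3$ and combining with the $f_1$-estimate assembles the weight $1+\ln(t/r)$ in the integrand and yields \eqref{norm-Ta}.
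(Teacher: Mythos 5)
Your proposal follows essentially the same route as the paper's proof: the same decomposition $f=f_1+f_2$, the same use of \eqref{xy} and the Fubini rearrangement, \eqref{propBMO} as the source of the logarithm, \eqref{EMJ} for the dual-weight H\"older step on the $y$-oscillation, John--Nirenberg for the $x$-oscillation, and \eqref{sal01} to fold the $f_1$-term into the final integral. The only cosmetic difference is that you split $|a(x)-a(y)|$ into three pieces $J_1,J_2,J_3$ at once, whereas the paper first separates the $x$- and $y$-oscillations into $I_2$ and $I_1$ and then splits $I_1$ further; the content is identical.
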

\begin{proof}
The decomposition  $f= f\chi_{2\B_r^+(x_0)}+  f\chi_{(2\B_r^+(x_0))^c}=  f_1+f_2$  gives
$$
\|{\wtl T}_{a} f\|_{p,w;\B_r^+(x_0)}\leq  \|{\wtl T}_{a} f_1\|_{p,w;\B_r^+(x_0)}+\|{\wtl T}_{a} f_2\|_{p,w;\B_r^+(x_0)}.
$$
From the boundedness of ${\wtl T}_{a}$ in $L_{p,w}(\R^n_+)$ it follows
$$
\|{\wtl T}_{a} f_1\|_{p,w;\B_r^+(x_0)}  \leq C  [w]_{A_{p}}^{\frac1p} \,\|a\|_{*} \, \|f\|_{p,w;2\B_r^+(x_0)}.
$$
On the other hand, because of  \eqref{xy} we  can write
\begin{equation*}
\begin{split}
&\|{\wtl T}_{a} f_2\|_{p,w;\B_r^+(x_0)}\\
&\   \leq C \left(\int_{\B_r^+(x_0)}\left(\int_{(2\B_r^+(x_0))^c}
\frac{|a(y)-a_{\B^+_r(x_0)}||f(y)|}{|x_0-y|^{n}}\, dy\right)^p w(x)\, dx\right)^{\frac1p}\\
&\  + C \left(\int_{\B_r^+(x_0)}\left(\int_{(2\B_r^+(x_0))^c}\frac{|a(x)-a_{\B_r^+(x_0)}||f(y)|}{|x_0-y|^{n}}\, dy\right)^p w(x)\, dx\right)^{\frac1p}\\
&\   =I_1+I_2.
\end{split}
\end{equation*}
Where, as in \cite{GulSoft1},  we have
$$
I_1 \leq  C w(\B_r^+(x_0))^{\frac1p}\int_{2r}^{\i}\int_{\B_t^+(x_0)}|a(y)-a_{\B_r^+(x_0)}||f(y)|\,dy\,\frac{dt}{t^{n+1}}\,.
$$
Applying H\"older's inequality, Lemma~\ref{lem2.4.},  \eqref{propBMO} and \eqref{EMJ}, we get
\begin{align*}
I_1  & \leq C w(\B_r^+(x_0))^{\frac1p} \int_{2r}^{\i}\int_{\B_t^+(x_0)} |a(y)-a_{\B_t^+(x_0)}||f(y)|\,dy\,\frac{dt}{t^{n+1}}
\\
&+  C w(\B_r^+(x_0))^{\frac1p} \int_{2r}^{\i}\int_{\B_t^+(x_0)} |a_{\B_t^+(x_0)}-a_{\B_r^+(x_0)}||f(y)|\,dy\,\frac{dt}{t^{n+1}}
\\
& \leq C \, w(\B_r^+(x_0))^{\frac1p}\int_{2r}^{\i}
 \left(\int_{\B_t^+(x_0)}|a(y)-a_{\B_t^+(x_0)}|^{p'} w(y)^{1-p'}\,dy\right)^{\frac{1}{p'}}\\
&   \phantom{hhhhhhhhhhhhhhhhhhhhhhhhhhhhhh} \times  \|f\|_{p,w;\B_t^+(x_0)}\,\frac{dt}{t^{n+1}}
\\
& + C [w]_{A_p}^{\frac{1}{p}} w(\B_r^+(x_0))^{\frac1{p}} \|a\|_*  \int_{2r}^{\i} \ln\frac{t}{r} \|f\|_{ p,w;\B_t(x_0)}
w(\B_t(x_0))^{-\frac{1}{p}} \frac{dt}{t}
\\
& \leq C [w]_{A_p}^{\frac{1}{p}}  w(\B_r^+(x_0))^{\frac{1}{p}} \|a\|_{*}  \int_{2r}^{\i}  \|f\|_{p,w;\B_t^+(x_0)}
 w(\B_t^+(x_0))^{-\frac1{p}}\,\frac{dt}{t^{n+1}}
\\
& + C [w]_{A_p}^{\frac{1}{p}}  w(\B_r^+(x_0))^{\frac{1}{p}} \|a\|_{*}  \int_{2r}^{\i}\ln \frac{t}{r} \|f\|_{p,w;\B_t^+(x_0)}
 w(\B_t^+(x_0))^{-\frac{1}{p}} \, \frac{dt}{t}
\\
& \leq C [w]_{A_p}^{\frac{1}{p}} w(\B_r^+(x_0))^{\frac{1}{p}} \|a\|_{*}  \int_{2r}^{\i}\Big(1+\ln \frac{t}{r}\Big) \|f\|_{p,w;\B_t^+(x_0)}
 w(\B_t^+(x_0))^{-\frac{1}{p}} \, \frac{dt}{t}\,.
\end{align*}
By Lemma~\ref{lem2.4.} and  \eqref{sal00}  we get
$$
I_2\leq  C [w]_{A_p}^{\frac{1}{p}} 
  \|a\|_{*} \, w(\B_r^+(x_0))^{\frac{1}{p}} \, \int_{2r}^{\i}  w(\B_t^+(x_0))^{-\frac{1}{p}}  \|f\|_{p,w;\B_t^+(x_0)}\, \frac{dt}{t}\,.
$$
Summing up $I_1$ and $I_2$  we get that for all $p \in (1,\infty)$
\begin{equation} \label{deckfV}
\|{\wtl T}_{a} f_2\|_{p,w;\B_r^+(x_0)} \leq C [w]_{A_p}^{\frac{1}{p}} \|a\|_{*} \, w(\B_r^+(x_0))^{\frac{1}{p}} \,
\int_{2r}^{\i} \Big(1+\ln \frac{t}{r}\Big)\frac{ \|f\|_{p,w;\B_t^+(x_0)}}{ w(\B_t^+(x_0))^{\frac{1}{p}}} \, \frac{dt}{t}.
\end{equation}
Finally,
\begin{align*}
\|{\wtl T}_{a} f\|_{p,w;\B_r^+(x_0)} & \  \leq  C [w]_{A_p}^{\frac{1}{p}} \|a\|_*\Big(\|f\|_{p,w;2\B_r^+(x_0)}
\\
& \  + w(\B_r^+(x_0))^{\frac{1}{p}} \, \int_{2r}^{\i} \Big(1+\ln \frac{t}{r}\Big)
 \frac{\|f\|_{p,w;\B_t^+(x_0)}}{ w(\B_t^+(x_0))^{\frac{1}{p}}} \, \frac{dt}{t}\Big)\,,
\end{align*}
and the statement  follows by \eqref{sal01}.
\end{proof}

\begin{theorem} \label{theor3.3F}
Let $w \in A_{p},$ $p\in(1,\infty),$ $a \in BMO$ and $(\varphi_1,\varphi_2)$ be  such that
\begin{equation}\label{condition2}
\int_{r}^{\infty} \Big(1+\ln \frac{t}{r}\Big)\, \frac{\es_{t<s<\infty} \varphi_1(x,s)
w(\B_s(x))^{\frac{1}{p}}}{w(\B_t(x))^{\frac{1}{p}}} \, \frac{dt}{t} \le
 C \, \varphi_2(x,r)\,.
\end{equation}
Suppose ${\wtl T}_{a}$ is  a sub-linear operator satisfying  \eqref{sublcomm} and bounded on $L_{p,w}(\R^n_+)$.
Then  ${\wtl T}_{a}$ is bounded from $M_{p,\varphi_1}(\R^n_+,w)$ to $M_{p,\varphi_2}(\R^n_+,w)$ and
\begin{equation}\label{normTaf}
\|{\wtl T}_{a} f\|_{p,\varphi_2,w;\R^n_+} \leq C[w]_{A_p}^{\frac1p} \|a\|_{*} \, \|f\|_{p,\varphi_1,w;\R^n_+}
\end{equation}
with a constant independent of $f$ and $a.$
\end{theorem}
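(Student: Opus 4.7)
The strategy mirrors the proof of Theorem~\ref{3.4.}, with Lemma~\ref{lem5.1.} playing the role of Lemma~\ref{lem3.3.} and condition~\eqref{condition2} replacing~\eqref{condition1}. The extra ingredient is the logarithmic factor $1+\ln(t/r)$, which forces a slight generalization of the Hardy-type machinery of Theorem~\ref{Hardy}.

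First I would apply Lemma~\ref{lem5.1.} at each pair $(x,r)\in\R^n_+\times\R_+$, divide both sides by $\varphi_2(x,r)\,w(\B_r^+(x))^{1/p}$, and take the supremum over $(x,r)$. This reduces matters to proving
\begin{equation*}
\sup_{x\in\R^n_+,\,r>0}\varphi_2(x,r)^{-1}\int_{r}^{\infty}\Big(1+\ln\tfrac{t}{r}\Big)\frac{\|f\|_{p,w;\B_t^+(x)}}{w(\B_t^+(x))^{1/p}}\,\frac{dt}{t}\leq C\,\|f\|_{p,\varphi_1,w;\R^n_+},
\end{equation*}
after which \eqref{normTaf} follows by combining with the prefactor $C[w]_{A_p}^{1/p}\|a\|_{*}$ already produced by Lemma~\ref{lem5.1.}.

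The core of the argument is a log-weighted analogue of Theorem~\ref{Hardy}. Fix $x\in\R^n_+$ and set $g(t):=\|f\|_{p,w;\B_t^+(x)}$, $v_1(s):=\varphi_1(x,s)^{-1}w(\B_s^+(x))^{-1/p}$, and $M:=\|f\|_{p,\varphi_1,w;\R^n_+}$, so that $v_1(s)g(s)\leq M$ for every $s>0$. Since $g$ is nondecreasing in $t$, for every $t>0$
\begin{equation*}
g(t)\leq \es_{s>t}g(s)\leq M\,\es_{s>t}\frac{1}{v_1(s)}=M\,\es_{t<s<\infty}\varphi_1(x,s)\,w(\B_s^+(x))^{1/p}.
\end{equation*}
Substituting this pointwise bound into the integral displayed above and invoking hypothesis~\eqref{condition2} produces the factor $C\,M\,\varphi_2(x,r)$ on the right; dividing by $\varphi_2(x,r)$ and taking the supremum in $(x,r)$ yields the desired inequality.

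The one place where care is needed is that the kernel $\psi_r(t)=(1+\ln(t/r))\,w(\B_t^+(x))^{-1/p}t^{-1}$ depends on $r$, placing the relevant Hardy inequality just outside the literal statement of Theorem~\ref{Hardy}. However, nondecreasingness of $g$ is the only property of $g$ used in the proof of that theorem, so the same one-line monotonicity step accommodates the $r$-dependent kernel without modification, and condition~\eqref{condition2} has been formulated so that its left-hand side is exactly the integral appearing after the substitution. I do not expect any genuine obstacle beyond this bookkeeping; the whole proof amounts to a transcription of the proof of Theorem~\ref{3.4.} with the extra logarithmic weight carried along.
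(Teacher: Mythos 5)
Your proposal is correct and follows essentially the same route as the paper, which simply invokes Lemma~\ref{lem5.1.} together with the Hardy-type bound of Theorem~\ref{Hardy} exactly as in the proof of Theorem~\ref{3.4.}. Your explicit monotonicity argument for $g(t)=\|f\|_{p,w;\B_t^+(x)}$ merely unwinds the sufficiency half of Theorem~\ref{Hardy} to accommodate the $r$-dependent logarithmic kernel, which is precisely the bookkeeping the paper leaves implicit.
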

The statement of the  theorem follows by Lemma~\ref{lem5.1.} and Theorem~\ref{Hardy}
in the same manner as the proof of Theorem~\ref{3.4.}.

\section{Calder\'on-Zygmund operators in    $M_{p,\varphi}(w)$}\label{sec5}
\setcounter{theorem}{0}
\setcounter{definition}{0}
\setcounter{lemma}{0}
\setcounter{corollary}{0}

In the present section we deal with Calder\'on-Zygmund type integrals and their commutators with $BMO$ functions.
 We start with the definition of the corresponding kernel.
\begin{definition}\label{CZK}
A measurable function $\K(x,\xi):\Rn\times\Rn\sm\{0\}\to \R$ is called a variable Calder\'on-Zygmund kernel if:
\begin{itemize}
\item[$i)$] $\K(x,\cdot)$ is a  Calder\'on-Zygmund kernel for almost all $x\in\Rn:$
\begin{itemize}
\item[$i_a)$] $\K(x,\cdot)\in C^\i(\Rn\sm\{0\}),$
\item[$i_b)$] $\K(x,\mu\xi)=\mu^{-n}\K(x,\xi)$\quad $\forall \mu>0,$
 \item[$i_c)$] $\ds \int_{\SS^{n-1}}\K(x,\xi)d\sigma_\xi=0$\quad  $\ds \int_{\SS^{n-1}}|\K(x,\xi)|d\sigma_\xi<+\i,$
\end{itemize}
\item[$ii)$] $\ds \max_{|\beta|\leq 2n}\big\|D^\beta_\xi \K\big\|_{\infty; \Rn\times\SS^{n-1}}=M<\i.$
\end{itemize}
\end{definition}
The  singular integrals
\begin{align*}
\KK f(x):=& P.V.\int_{\Rn}\K(x,x-y)f(y)\,dy\\
\CC[a, f](x):=& P.V.\int_{\Rn}\K(x,x-y)[a(x)-a(y)]f(y)\,dy\\
=& a\KK f(x)-\KK(af)(x)
\end{align*}
are bounded in $L_{p,w}$ (see  \cite{KarGulSer} for more references) and satisfy  \eqref{subl1} and \eqref{sublcomm}.
Hence the next results hold as a simple application of the estimates from Sections~\ref{sec3a} and~\ref{sec3} 
  (see \cite{GulSoft1} for details).
\begin{theorem}\label{CZcont}
Let  $w \in A_{p},$  $p\in(1,\infty)$ and $\varphi$ be weight   such that for all $x\in\Rn$ and $r>0$
\begin{equation}\label{weight}
\int_{r}^{\infty} \Big(1+\ln \frac{t}{r}\Big)\, \frac{\es_{t<s<\infty} \varphi(x,s)
 w(\B_s(x))^{\frac{1}{p}}}{w(\B_t(x))^{\frac{1}{p}}} \, \frac{dt}{t} \le C \,\varphi(x,r).
\end{equation}
Then for any $f\in M_{p,\varphi}(\Rn,w)$ and $a\in BMO$   there exist  constants  depending on $n,p,\varphi, w,$ and the kernel such that
\begin{align} \label{sal22}
\nonumber
\|\KK f\|_{p,\varphi,w}
 & \leq C [w]_{A_p}^{\frac{1}{p}} \|f\|_{p,\varphi,w}\,,\\
  \|\CC[a,f]\|_{p,\varphi,w} &  
\leq C [w]_{A_p}^{\frac{1}{p}} \|a\|_\ast\|f\|_{p,\varphi,w}\,.
\end{align}
\end{theorem}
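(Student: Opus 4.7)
The plan is to deduce Theorem~\ref{CZcont} directly from Theorems~\ref{Tbound} and~\ref{TAbound}, once we verify (a) the pointwise size conditions \eqref{subl1} and \eqref{sublcomm} for $\KK$ and $\CC[a,\cdot]$, and (b) the weighted $L_{p,w}(\R^n)$ boundedness of these operators with the right dependence on $[w]_{A_p}$.

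First, I would check the pointwise bounds. Using the homogeneity $\K(x,\mu\xi)=\mu^{-n}\K(x,\xi)$ together with the uniform bound on the sphere implied by item $ii)$ of Definition~\ref{CZK}, one obtains $|\K(x,x-y)|\le M|x-y|^{-n}$. For any $f\in L_1(\R^n)$ with compact support and $x\notin\supp f$ the principal value defining $\KK f(x)$ reduces to an absolutely convergent integral, and the estimate
\begin{equation*}
|\KK f(x)|\le M\int_{\R^n}\frac{|f(y)|}{|x-y|^n}\,dy
\end{equation*}
verifies \eqref{subl1}. Inserting the factor $|a(x)-a(y)|$ in the same bound immediately yields \eqref{sublcomm} for $\CC[a,f]$.

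Next, I would invoke the classical weighted theory: for a variable Calder\'on-Zygmund kernel satisfying Definition~\ref{CZK} with a uniform smoothness bound in $\xi$, both $\KK$ and the commutator $\CC[a,\cdot]$ are bounded on $L_{p,w}(\R^n)$ for $1<p<\infty$, $w\in A_p$, $a\in BMO$, with the sharp quantitative estimates
\begin{equation*}
\|\KK f\|_{p,w}\le C[w]_{A_p}^{1/p}\|f\|_{p,w},\qquad \|\CC[a,f]\|_{p,w}\le C[w]_{A_p}^{1/p}\|a\|_\ast\|f\|_{p,w}.
\end{equation*}
These are exactly the hypotheses required by Theorems~\ref{Tbound} and~\ref{TAbound}; I would simply quote them (references are indicated in the text, e.g.\ \cite{KarGulSer}) rather than reproving them.

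Finally, with $\varphi_1=\varphi_2=\varphi$, condition \eqref{weight} coincides with \eqref{condition2} of Theorem~\ref{TAbound}, so the commutator estimate in \eqref{sal22} follows at once. Since $1+\ln(t/r)\ge 1$ for $t\ge r$, condition \eqref{weight} also implies \eqref{condition1}, so Theorem~\ref{Tbound} yields the first inequality in \eqref{sal22} with the stated dependence on $[w]_{A_p}^{1/p}$. The only genuinely non-trivial ingredient is the quantitative $A_p$-weighted boundedness of $\KK$ and $\CC[a,\cdot]$ on $L_{p,w}(\R^n)$; everything else is a routine reduction to the abstract theorems of Sections~\ref{sec3a}--\ref{sec3}.
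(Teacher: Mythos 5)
Your proposal is correct and follows essentially the same route as the paper: verify the size conditions \eqref{subl1} and \eqref{sublcomm} for $\KK$ and $\CC[a,\cdot]$, quote the weighted $L_{p,w}$ boundedness from the literature, and apply Theorems~\ref{Tbound} and~\ref{TAbound} with $\varphi_1=\varphi_2=\varphi$, noting that \eqref{weight} implies \eqref{condition1}. The paper is even terser (it simply asserts these facts and cites \cite{KarGulSer}), so your added verification of the pointwise kernel bound via homogeneity is a welcome elaboration rather than a deviation.
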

The assertion follows by \eqref{normTf} and \eqref{normTaf}.
\begin{corollary}
Let $\Omega\subset\Rn,$  $\partial\Omega\in C^{1,1},$
$\K:\Omega\times \Rn\sm\{0\}\to \R$ be as in Definition~\ref{CZK}, $a\in BMO(\Omega)$ and $f\in M_{p,\varphi}(\Omega,w)$ with $p$, $\varphi,$ and $w$  as in Theorem~\ref{CZcont}. Then
\begin{align}\label{normO}
\nonumber
\|\KK f\|_{p,\varphi,w;\Omega} & 
\leq C [w]_{A_p}^{\frac{1}{p}} \|f\|_{p,\varphi,w;\Omega}\,,\\
 \|\CC[a,f]\|_{p,\varphi,w;\Omega} & 
\leq C [w]_{A_p}^{\frac{1}{p}} \|a\|_\ast\|f\|_{p,\varphi,w;\Omega}
\end{align}
with $C=C(n,p,\varphi,[w]_{A_p},|\Omega|, \K).$
\end{corollary}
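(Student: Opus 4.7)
The plan is to deduce the corollary from Theorem~\ref{CZcont} by extending the data $(f,a,\K)$ from $\Omega$ to the whole of $\R^n$ in a norm-controlled way, applying the global estimates \eqref{sal22}, and restricting the result back to $\Omega$.

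First I would extend $f \in M_{p,\varphi}(\Omega,w)$ by zero to $\tilde f$ on $\R^n$. For any ball $\B_r(x)$ with $x\in\R^n$, $\int_{\B_r(x)}|\tilde f|^p w\,dy$ is nonzero only when $\B_r(x)\cap\Omega\neq\emptyset$; in that case, picking a point $x^\ast\in\bar\Omega$ with $|x-x^\ast|\le r$ one obtains $\B_r(x)\cap\Omega\subset\B_{2r}(x^\ast)\cap\Omega=\Omega_{2r}(x^\ast)$. Using the doubling property of $w$ (statement (4) of the lemma on $A_p$-weights) together with a mild pointwise comparability assumption on $\varphi$ for nearby points (which is available from the conditions imposed on $\varphi$), this yields $\|\tilde f\|_{p,\varphi,w}\le C(|\Omega|,w,\varphi)\|f\|_{p,\varphi,w;\Omega}$. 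For radii $r\gtrsim\text{diam}(\Omega)$ we use the Muckenhoupt doubling and that $w(\B_r(x))^{-1/p}$ is comparable to $w(\Omega)^{-1/p}$ up to a constant depending on $|\Omega|$.

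Next I would extend $a\in BMO(\Omega)$ to $\tilde a\in BMO(\R^n)$ with $\|\tilde a\|_\ast\le C\|a\|_\ast$, invoking the extension result from \cite{A} cited just after Definition~\ref{CZK} in Section~\ref{sec2}. Then, using the $C^{1,1}$-regularity of $\partial\Omega$, I would extend the variable kernel $\K(x,\xi)$ from $\Omega\times(\R^n\setminus\{0\})$ to $\tilde\K$ on $\R^n\times(\R^n\setminus\{0\})$ so that $\tilde\K$ satisfies all the conditions of Definition~\ref{CZK} with a constant $M$ controlled by the original one; this is a standard Whitney-type extension for the $x$-variable, keeping the $\xi$-structure untouched so that conditions $(i_a)$--$(i_c)$ are preserved automatically. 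Denote by $\tilde{\mathfrak K}$ and $\tilde{\mathfrak C}$ the corresponding operators on $\R^n$.

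For $x\in\Omega$ and $\tilde f=f\chi_\Omega$, one checks directly that $\tilde{\mathfrak K}\tilde f(x)=\mathfrak K f(x)$ and $\tilde{\mathfrak C}[\tilde a,\tilde f](x)=\mathfrak C[a,f](x)$, since the integrands vanish outside $\Omega$. Applying Theorem~\ref{CZcont} to $\tilde f$ and $\tilde a$ yields
\begin{equation*}
\|\tilde{\mathfrak K}\tilde f\|_{p,\varphi,w}\le C[w]_{A_p}^{1/p}\|\tilde f\|_{p,\varphi,w},\qquad
\|\tilde{\mathfrak C}[\tilde a,\tilde f]\|_{p,\varphi,w}\le C[w]_{A_p}^{1/p}\|\tilde a\|_\ast\|\tilde f\|_{p,\varphi,w},
\end{equation*}
and restricting the supremum to $x\in\Omega$ and $r>0$, together with the three extension estimates, produces \eqref{normO} with $C=C(n,p,\varphi,[w]_{A_p},|\Omega|,\K)$. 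The main technical obstacle I expect is the comparison of Morrey norms under the zero-extension of $f$: one must handle balls $\B_r(x)$ centered at $x\notin\Omega$ and show that the supremum in $\|\tilde f\|_{p,\varphi,w}$ is controlled by the supremum restricted to centers in $\Omega$, which relies on the doubling property of $w$, the behavior of $\varphi$ under translation of centers by $O(r)$, and the boundedness of $\Omega$.
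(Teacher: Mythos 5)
Your overall strategy (zero-extension of $f$, $BMO$-extension of $a$ via \cite{A}, extension of the kernel, then apply Theorem~\ref{CZcont} and restrict) is the natural one, and two of the three extension steps are unproblematic: the $BMO$ extension is exactly the result of \cite{A} quoted in Section~\ref{sec2}, and for the kernel no Whitney construction is needed, since Definition~\ref{CZK} imposes no regularity in $x$ --- composing with a measurable nearest-point projection onto $\overline\Omega$ already preserves conditions $i)$ and $ii)$ with the same constant $M$ (and in fact the values $\K(x,\cdot)$ for $x\notin\Omega$ never enter the left-hand side of \eqref{normO}, which only sees $x\in\Omega$).

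The genuine gap is the step $\|\tilde f\|_{p,\varphi,w}\le C\|f\|_{p,\varphi,w;\Omega}$. For a ball $\B_r(x)$ with $x\notin\Omega$ meeting $\Omega$, your argument reduces to the inequality $\varphi(x^*,2r)\,w(\B_{2r}(x^*))^{\frac1p}\le C\,\varphi(x,r)\,w(\B_r(x))^{\frac1p}$ for some $x^*\in\overline\Omega$ with $|x-x^*|\le r$. The $w$-factor is indeed handled by the doubling property, but the factor $\varphi(x^*,2r)/\varphi(x,r)$ is not: the only hypothesis on $\varphi$ is \eqref{weight}, which is an integral condition in $r$ for each \emph{fixed} $x$ and gives no comparability of $\varphi$ at distinct centers. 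A weight such as $\varphi(x,r)=g(x)r^{(\lambda-n)/p}$ with $g>0$ merely measurable and wildly oscillating satisfies \eqref{weight} with a uniform constant yet defeats your comparison, so the ``mild pointwise comparability \dots available from the conditions imposed on $\varphi$'' that you invoke is in fact not available. The way to avoid this (and what the source \cite{GulSoft1} behind Theorem~\ref{CZcont} actually does) is to bypass the global Morrey norm of $\tilde f$ altogether: a pointwise local estimate of the form \eqref{eq3.5.} (proved for operators satisfying \eqref{subl1} in \cite{GULAKShIEOT2012}, and for $\wtl T$ in Lemma~\ref{lem3.3.}) holds for every fixed center $x_0$ and radius $r$; taking $x_0\in\Omega$ and $f$ extended by zero, one has $\|f\|_{p,w;\B_t(x_0)}=\|f\|_{p,w;\Omega\cap\B_t(x_0)}$, and applying Theorem~\ref{Hardy} separately at each such $x_0$ yields \eqref{normO} with the supremum taken over centers in $\Omega$ on both sides, so that $\varphi$ is never evaluated at two different centers. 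Your proof becomes correct if you either adopt this route or add to the hypotheses an explicit near-doubling condition on $\varphi$ in the spatial variable.
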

\begin{corollary} \label{VBN}(see  \cite{ChFraL1, GulSoft1})
Let $p$,  $\varphi,$ and $w$ be as in Theorem~\ref{CZcont} and $a\in VMO$ with a $VMO$-modulus $\gamma_a.$
Then for any $\varepsilon>0$ there exists a positive number
$\rho_0=\rho_0(\varepsilon,\gamma_a)$ such that for any ball $\B_r$
with a radius $r\in(0,\rho_0)$
and all $f\in M_{p,\varphi}(\B_r,w)$
\begin{equation}\label{normB}
\|\CC[a,f]\|_{p,\varphi,w;\B_r}
\leq  C\varepsilon\|f\|_{p,\varphi,w;\B_r},
\end{equation}
with  $C$ independent of $\varepsilon$, $f,$ and $r.$
\end{corollary}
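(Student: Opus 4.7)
The plan is to exploit two observations: the commutator $\CC[a,f]$ is invariant under adding a constant to $a$, and the $VMO$ hypothesis forces the $BMO$ seminorm of $a$, after subtracting its mean on $\B_r$, to be arbitrarily small when $r$ is small. First I would note that for any $x\in \B_r$,
$$
\CC[a,f](x) = a(x)\KK f(x) - \KK(af)(x) = \CC[a-a_{\B_r},f](x),
$$
since the additional terms $a_{\B_r}\KK f(x)$ cancel. Hence it suffices to bound $\CC[\tilde a,f]$ with $\tilde a := a-a_{\B_r}$.

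Next I would estimate the $BMO$ seminorm of $\tilde a$ on $\B_r$. For every ball $\B \subset \B_r$ one has $\tilde a_{\B} = a_{\B} - a_{\B_r}$, so $\tilde a(y) - \tilde a_{\B} = a(y) - a_{\B}$, and consequently
$$
\frac{1}{|\B|}\int_{\B}|\tilde a(y) - \tilde a_{\B}|\,dy \le \gamma_a(r).
$$
Using the Acquistapace extension theorem \cite{A} (already employed throughout the paper), I would extend $\tilde a$ to a function $\bar a \in BMO(\R^n)$ with $\bar a = \tilde a$ on $\B_r$ and $\|\bar a\|_\ast \le C\,\gamma_a(r)$. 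Applying the preceding corollary to $\Omega = \B_r$ (which has $C^{1,1}$ boundary) with $\bar a$ in place of $a$ then yields
$$
\|\CC[a,f]\|_{p,\varphi,w;\B_r} = \|\CC[\bar a,f]\|_{p,\varphi,w;\B_r} \le C[w]_{A_p}^{1/p}\|\bar a\|_\ast\|f\|_{p,\varphi,w;\B_r} \le C\,\gamma_a(r)\,\|f\|_{p,\varphi,w;\B_r}.
$$
Finally, since $a\in VMO$ gives $\gamma_a(r)\to 0$ as $r\to 0$, I would select $\rho_0 = \rho_0(\varepsilon,\gamma_a)$ so small that $\gamma_a(r)\le \varepsilon$ for all $r\in(0,\rho_0)$, completing the argument.

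The hard part will be ensuring that the constant $C$ in the commutator bound is genuinely \emph{uniform} in $r$ as $\B_r$ shrinks. The previous corollary records $C = C(n,p,\varphi,[w]_{A_p},|\Omega|,\K)$, so the implicit dependence on $|\B_r|$ must be shown to be harmless. The cleanest route is to bypass the domain statement and apply Theorem~\ref{CZcont} directly on $\R^n$ to the zero extension of $f$, which has no $|\Omega|$-dependence; what then needs verification is that the global Morrey norm of the zero extension is controlled by $\|f\|_{p,\varphi,w;\B_r}$ with a constant independent of $r$. A secondary technicality is checking that the Acquistapace extension from $\B_r$ yields a $BMO$ constant independent of the radius, which follows by a rescaling argument since the $BMO$ seminorm is scale-invariant.
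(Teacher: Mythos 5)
Your argument is correct and is exactly the standard one that the paper relies on: the paper gives no proof of this corollary, deferring to \cite{ChFraL1, GulSoft1}, where the result is obtained precisely by the invariance $\CC[a,f]=\CC[a-a_{\B_r},f]$, the bound $\|a-a_{\B_r}\|_{\ast,\B_r}\le C\gamma_a(r)$, a norm-preserving $BMO$ extension, and the global commutator estimate \eqref{sal22}, followed by choosing $\rho_0$ so that $\gamma_a(r)\le\varepsilon$. You also correctly flag the only real technical points (uniformity of the constant in $r$ and of the extension constant), which are handled in the cited references exactly as you suggest.
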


For any $x,y\in \R^n_+$    define the {\it generalized reflection\/}	${\T}(x;y)$
\begin{equation}\label{GR}
{\T}(x;y) =x-2x_n\frac{{\bf a}^n(y)}{a^{nn}(y)}\qquad
{\T}(x)={\T}(x;x):{\R}^n_+\to {\R}^n_-
\end{equation}
where ${\bf a}^n$ is  the last row of the  matrix ${\bf a}=\{a^{ij}\}_{i,j=1}^n.$
 Then
there exist positive  constants $C_1, C_2$  dependent on $n$	and
$\Lambda,$ such that
\begin{equation}\label{CTC}
C_1|\wtl x - y| \leq |{\T}(x)-y| \leq C_2 |\wtl x -y|\qquad \forall \  x,y\in{\R}_+^{n}.
\end{equation}
Then  the nonsingular integrals
\begin{align}\label{KCf}
\wtl   {\KK}	f(x)& :=\int_{{\R}^{n}_+} \K (x,{\T}(x)-y)f(y)\, dy\\
\nonumber
\wtl   {\CC}  [a,f](x)& :=\int_{{\R}^{n}_+}
\K(x,{\T}(x)-y)[a(x)-a(y)]f(y)\, dy
\end{align}
are sub-linear and according to the results in Sections~\ref{sec3} and~\ref{sec4} we have.
\begin{theorem}\label{nonsing}
Let  $a \in BMO(\R^n_+),$ $w \in A_{p}$, $p\in(1,\infty)$ and $\varphi$ be Morrey weight satisfying  \eqref{weight}.
Then   $\wtl\KK f$ and $\wtl\CC[a, f]$ are continuous in $M_{p,\varphi}(\R^n_+,w)$  and for all $f\in M_{p,\varphi}(\R^n_+,w) $ holds
\begin{equation}\label{KC}
\|\wtl\KK f\|_{p,\varphi,w;\R^n_+} \leq   C [w]_{A_p}^{\frac{1}{p}}  \|f\|_{p,\varphi,w;\R^n_+}\quad  \|\wtl\CC[a, f]\|_{p,\varphi,w;\R^n_+} \leq
 C [w]_{A_p}^{\frac{1}{p}} \|a\|_\ast \, \|f\|_{p,\varphi,w;\R^n_+}
\end{equation}
with  constants dependent on known quantities only.
\end{theorem}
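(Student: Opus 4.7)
\textbf{Proof plan for Theorem~\ref{nonsing}.} The strategy is to recognize $\wtl\KK$ and $\wtl\CC[a,\cdot]$ as instances of the abstract nonsingular operators treated in Sections~\ref{sec3} and~\ref{sec4}, and then to invoke Theorem~\ref{3.4.} and Theorem~\ref{theor3.3F} with $\varphi_1=\varphi_2=\varphi$. The argument has three steps: verifying the pointwise kernel bounds, verifying $L_{p,w}(\R^n_+)$ boundedness, and quoting the two Morrey-type boundedness theorems.

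First I would establish the pointwise size conditions \eqref{tlT} and \eqref{sublcomm}. By property $(ii)$ and the homogeneity $(i_b)$ of Definition~\ref{CZK} one has $|\K(x,\xi)|\le M|\xi|^{-n}$ for all $\xi\ne 0$, and the equivalence \eqref{CTC} converts this into
$$
|\K(x,\T(x)-y)|\le \frac{C}{|\T(x)-y|^n}\le \frac{C}{|\tl x-y|^n}\qquad \forall\, x,y\in \R^n_+.
$$
Plugging this into the defining formulas \eqref{KCf} yields precisely \eqref{tlT} for $\wtl\KK$ and \eqref{sublcomm} for $\wtl\CC[a,\cdot]$.

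Second, I would appeal to the $L_{p,w}(\R^n_+)$-boundedness of the two reflected operators with the control
$\|\wtl\KK f\|_{p,w;\R^n_+}\le C[w]_{A_p}^{1/p}\|f\|_{p,w;\R^n_+}$ and $\|\wtl\CC[a,f]\|_{p,w;\R^n_+}\le C[w]_{A_p}^{1/p}\|a\|_\ast\|f\|_{p,w;\R^n_+}$. Since $\T$ sends $\R^n_+$ into $\R^n_-$ and the kernel is comparable to the reflected Calder\'on–Zygmund kernel, this reduces by the change $y\mapsto \tl y$ and the distance identity $|\tl x-\tl y|=|x-y|$ to the standard weighted-$L_p$ bounds for $\KK$ and $\CC[a,\cdot]$ recalled in Section~\ref{sec5} (cf.~\cite{KarGulSer}); the contribution coming from replacing $\tl x$ by $\T(x)$ is dominated by \eqref{CTC}.

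Finally, since the pair $(\varphi,\varphi)$ satisfying \eqref{weight} trivially verifies \eqref{condition1} (because $1\le 1+\ln(t/r)$ for $t\ge r$) and is exactly \eqref{condition2}, Theorem~\ref{3.4.} applied to $T=\wtl\KK$ and Theorem~\ref{theor3.3F} applied to $T_a=\wtl\CC[a,\cdot]$ yield \eqref{normTf} and \eqref{normTaf} with $\varphi_1=\varphi_2=\varphi$, which is exactly \eqref{KC}. The only genuinely nontrivial input is the $L_{p,w}(\R^n_+)$-boundedness of the reflected operators in the second step; everything else is a direct specialization of the abstract machinery already in place.
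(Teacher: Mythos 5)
Your proposal is correct and follows essentially the same route as the paper: the authors likewise observe that $\wtl\KK$ and $\wtl\CC[a,\cdot]$ are sub-linear, satisfy \eqref{tlT} and \eqref{sublcomm} via the kernel bound and \eqref{CTC}, are bounded on $L_{p,w}(\R^n_+)$, and then apply Theorem~\ref{3.4.} and Theorem~\ref{theor3.3F} with $\varphi_1=\varphi_2=\varphi$. Your added remark that \eqref{weight} implies \eqref{condition1} because $1\le 1+\ln(t/r)$ is a correct and useful detail the paper leaves implicit.
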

\begin{corollary} \label{localnonsing}(see  \cite{ChFraL1, GulSoft1})
Let $p$,  $\varphi$ and $w$  be as in Theorem~\ref{nonsing} and $a\in VMO$ with a $VMO$-modulus $\gamma_a.$
Then for any $\varepsilon>0$ there exists a positive number
$\rho_0=\rho_0(\varepsilon,\gamma_a)$ such that for any ball $\B_r^+$
with a radius $r\in(0,\rho_0)$
and all $f\in M_{p,\varphi}(\B_r^+,w)$
\begin{equation}\label{tlKl}
\|\CC[a,f]\|_{p,\varphi,w;\B^+_r}
\leq  C\varepsilon\|f\|_{p,\varphi,w;\B^+_r},
\end{equation}
where $C$ is independent of
$\varepsilon$, $f$ and $r$.
\end{corollary}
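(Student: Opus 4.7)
The plan is to reduce the $VMO$ statement to the $BMO$ bound of Theorem~\ref{nonsing} by replacing $a$ with a localised, truncated coefficient $\bar a$ whose $BMO$ norm is controlled by the $VMO$ modulus $\gamma_a$ at scale $r$. This is the standard Chiarenza--Frasca--Longo localisation already invoked for Corollary~\ref{VBN}.

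First I would extend $f\in M_{p,\varphi}(\B_r^+,w)$ by zero to the whole $\R^n_+$, denoting the extension still by $f$, so that $\|f\|_{p,\varphi,w;\R^n_+}\le C\|f\|_{p,\varphi,w;\B_r^+}$. Fix a cut-off $\eta\in C_0^\infty(\R^n)$ with $\eta\equiv 1$ on $\B_r$, $\supp\eta\subset 2\B_r$ and $|D\eta|\le C/r$, and set
$$
\bar a(x):=\eta(x)\bigl(a(x)-a_{2\B_r^+}\bigr).
$$
Using $\eta\equiv 1$ on $\B_r$ together with $\supp f\subset\B_r^+$, one checks directly that $\wtl\CC[a,f](x)=\wtl\CC[\bar a,f](x)$ for a.a.\ $x\in\B_r^+$, so $a$ can be replaced by $\bar a$ inside the commutator.

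The technical core is the $BMO$ bound
$$
\|\bar a\|_{\ast}\le C\gamma_a(2r),
$$
with $C$ depending only on $n$. I would prove it by partitioning the family of balls $\B_s(x)$ with respect to $2\B_r$: if $\B_s(x)\subset 2\B_r$ the oscillation of $\bar a$ equals that of $a$ and is $\le\gamma_a(s)\le\gamma_a(2r)$; if $\B_s(x)\cap 2\B_r=\emptyset$ then $\bar a\equiv 0$ on $\B_s(x)$; in the remaining intermediate case I would exploit the product identity
$$
|\bar a(x)-\bar a(y)|\le |a(x)-a(y)|+|a(y)-a_{2\B_r^+}|\,|\eta(x)-\eta(y)|
$$
together with Lemma~\ref{lem2.4.} and $|D\eta|\le C/r$ to dominate the mean oscillation of $\bar a$ on $\B_s(x)$ by a constant multiple of $\gamma_a(2r)$.

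Given $\varepsilon>0$, the $VMO$ property of $a$ lets me choose $\rho_0=\rho_0(\varepsilon,\gamma_a)$ so small that $C\gamma_a(2r)<\varepsilon$ for every $r\in(0,\rho_0)$. Applying Theorem~\ref{nonsing} to the pair $(\bar a,f)$ and restricting the resulting inequality to $\B_r^+$ then yields
\begin{align*}
\|\wtl\CC[a,f]\|_{p,\varphi,w;\B_r^+}&=\|\wtl\CC[\bar a,f]\|_{p,\varphi,w;\B_r^+}\\
&\le C[w]_{A_p}^{\frac1p}\|\bar a\|_{\ast}\|f\|_{p,\varphi,w;\R^n_+}\le C\varepsilon\|f\|_{p,\varphi,w;\B_r^+},
\end{align*}
which is the stated inequality. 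The main obstacle is clearly the $BMO$ bound on the truncation $\bar a$; once that is established, the rest is a direct identity and a single invocation of Theorem~\ref{nonsing} together with the definition of the $VMO$ modulus.
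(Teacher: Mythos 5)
Your proposal is correct and follows exactly the standard Chiarenza--Frasca--Longo localisation that the paper itself invokes (it gives no independent proof, only the citation to \cite{ChFraL1, GulSoft1}): extend $f$ by zero, replace $a$ by a cut-off truncation $\bar a=\eta(a-a_{2\B_r^+})$ whose $BMO$ norm is controlled by the $VMO$ modulus at scale comparable to $r$, and apply the global commutator estimate of Theorem~\ref{nonsing}. The only cosmetic imprecisions are that the oscillation of $\bar a$ coincides with that of $a$ only on balls contained in $\B_r$ (where $\eta\equiv1$), not in $2\B_r$, and that the resulting bound is more naturally $C\gamma_a(cr)$ for a dimensional constant $c$; neither affects the conclusion.
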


\section{The Dirichlet problem}\label{sec6}
\setcounter{theorem}{0}
\setcounter{definition}{0}
\setcounter{lemma}{0}
\setcounter{corollary}{0}

Let $\Omega\subset\R^n,$ $n\geq 3$ be a bounded $C^{1,1}$-domain. We consider the problem
\begin{equation} \label{DP}
\begin{cases}
Lu=    a^{ij}(x)D_{ij} u +b^i(x)D_iu+c(x)u=f(x) \quad \text{ a.a.  } x\in  \Omega,\\
   u\in\       W^2_{p,\varphi}(\Omega,w) \cap \overset{\circ}W{}^1_p(\Omega,w), \   p\in(1,\infty)
  \end{cases}
\end{equation}
 subject to the following conditions:
\begin{itemize}
\item[$H_1)$]  {\it Strong ellipticity:}  there exists a constant $\Lambda>0,$ such that
\begin{equation} \label{sal12}
\begin{cases}
\ds\Lambda^{-1}|\xi|^2\leq a^{ij}(x)\xi_i\xi_j\leq\Lambda|\xi|^2 &  \text{ a.a. } x\in\Omega,\  \forall\, \xi\in \R^n\\
   a^{ij}(x)=a^{ji}(x) & 1\leq i,j\leq n.
\end{cases}
\end{equation}
Let
 $\ba=\{a^{ij}\},$ then   $\ba\in L_{\infty}(\Omega)$ and  $\|\ba\|_{\i,\Omega}=\sum_{ij=1}^n\, \|a^{ij}\|_{\infty;\Omega}$
by \eqref{sal12}.

\item [$H_2)$]{\it Regularity of the data:}  $\ba\in VMO(\Omega)$ with $VMO$-modulus   $\gamma_{\ba}:=\sum\gamma_{a^{ij}}, $ $b^i, c\in L_\infty(\Omega),$  and $f\in M_{p,\varphi}(\Omega,w)$ with $w\in A_p,$
$1<p<\i$  and  $\varphi:\Omega\times\R_+\to \R_+$ measurable.
\end{itemize}

Let  $\L=a^{ij}(x)D_{ij},$   then $\L u= f(x)-b^i(x)D_i u(x)- c(x)u.$   As it is well known  (see \cite{ChFraL1, GulSoft1} and the references therein)  for any $x\in \supp\, u, $ a ball $\B_r\subset \Omega'$ and  a function  $v\in C_0^\infty(\B_r)$ we have the representation
 \begin{align}\label{IRF}
\nonumber
D_{ij}v(x)= &\  P.V.\int_{{\B}_r}\Gamma_{ij}(x,x-y)
\left[{\L} v(y)+\big(a^{hk}(x)-a^{hk}(y)\big)
D_{hk}v(y) \right]dy\\
&+{\L} v(x)\int_{{\SS}^{n-1}}\Gamma_j(x,y) y_i d\sigma_y\\
\nonumber
= &\   {\KK}_{ij}{\L} v(x)+{\CC}_{ij}[a^{hk},D_{hk}v](x) +
{\L} v(x)\int_{{\SS}^{n-1}}\Gamma_j(x;y)y_id\sigma_y
\end{align}
According to Remark~\ref{rem1} the  formula \eqref{IRF} holds true also for functions $v\in W^2_{p,w}(\B_r).$ Here
 $\Gamma_{ij}(x,\xi)=\partial^2 \Gamma(x,\xi)/\partial\xi_i\partial\xi_j$ and $\Gamma_{ij}$ are variable Calder\'on-Zygmund kernels as in Definition~\ref{CZK}  for all $1\leq i,j\leq n.$   Then the operators $\KK_{ij}$ and $\CC_{ij}$ are singular as $\KK$ and $\CC .$
In view of  the results obtained in Section~\ref{sec5} we get for $r $ small enough
$$
\|D^2v\|_{p,\varphi,w;\B_r}	\leq C \left(\varepsilon
\|D^2v\|_{p,\varphi,w;\B_r} +\|{\L} v\|_{p,\varphi,w;\B_r} \right)\,.
$$
Choosing  $r$ such that $C\varepsilon<1$ we can  move the norm of $D^2v$ on the left-hand side and   write
\begin{equation}\label{D2est}
\|D^2v\|_{p,\varphi,w;\B_r}	\leq C \|{\L} v\|_{p,\varphi,w;\B_r} \,.
\end{equation}
Take a    cut-off function  $\eta(x)\in C_0^\infty({\B}_r)$
$$
\eta(x)=\begin{cases}
1 &  x\in {\B}_{\theta r}\\
0 &  x\not\in {\B}_{\theta'r}
\end{cases}
$$
such that   $\theta'=\theta(3-\theta)/2>\theta$   for  $ \theta\in(0,1)$  and $ |D^s\eta|\leq C [\theta(1-\theta)r]^{-s}$ for $s=0,1,2.$
Apply  \eqref{D2est} to $v(x)=\eta(x) u(x)\in W^2_{p,w}(\B_r)$  we get
\begin{align*}
\|D^2u\|_{p,\varphi,w; \B_{\theta r}} & \leq  \| D^2 v\|_{p,\varphi,w;\B_{\theta'r}}\leq C\|\L v \|_{p,\varphi,w;\B_{\theta' r}}\\[10pt]
& \leq  C\left(\|{\L} u\|_{p,\varphi,w;\B_{\theta' r}}+\frac{\|Du\|_{p,\varphi,w;\B_{\theta' r}}}{\theta(1-\theta)r}
+\frac{\|u\|_{p,\varphi,w;\B_{\theta' r}}}{[\theta(1-\theta)r]^2}  \right)\,.
\end{align*}
Since  $1< \frac{1}{\theta(1-\theta)r}$  for $r<4$ and
\begin{equation}\label{Lu-est}
\|{\L} u\|_{p,\varphi,w;\B_{\theta' r}}\leq C\big( \|L u\|_{p,\varphi,w;\B_{\theta' r}}+
\|D u\|_{p,\varphi;w,\B_{\theta' r}} +\| u\|_{p,\varphi;w,\B_{\theta' r}}\big)
\end{equation}
we can write
$$
\|D^2u\|_{p,\varphi,w; \B_{\theta r}}  \leq  C\left(\|L u\|_{p,\varphi,w;\B_{\theta' r}}+\frac{\|Du\|_{p,\varphi,w;\B_{\theta' r}}}{\theta(1-\theta)r}
+\frac{\|u\|_{p,\varphi,w;\B_{\theta' r}}}{[\theta(1-\theta)r]^2}  \right)\,.
$$
Consider now the weighted semi-norms
$$
\Theta_s = \sup_{0<\theta<1} \big[\theta(1-\theta)r \big]^s \|D^s u \|_{p,\varphi,w;\B_{\theta r}}\qquad s=0,1,2.
$$
Because of the choice of $\theta'$ we have  $\theta(1-\theta)\leq 2\theta'(1-\theta').$
Thus, after standard transformations and  taking the supremum with respect to $\theta\in(0,1)$
we get
\begin{equation}\label{theta}
\Theta_2 \leq C \left(r^2\|L u\|_{p,\varphi,w;\B_{\theta'r}} +\Theta_1+\Theta_0 \right)\,.
\end{equation}

\begin{lemma}[Interpolation inequality]\label{interpolation}
There exists a constant  $C$ independent of $r$ such that
$$
\Theta_1\leq \varepsilon \Theta_2+\frac{C}{\varepsilon}\Theta_0\qquad \text{ for any } \varepsilon\in(0,2).
$$
\end{lemma}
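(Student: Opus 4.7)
The plan is to bound $[\theta(1-\theta)r]\|Du\|_{p,\varphi,w;\B_{\theta r}}$ uniformly in $\theta\in(0,1)$ by $\varepsilon\Theta_2+(C/\varepsilon)\Theta_0$ and then pass to the supremum. Following exactly the scheme used in the argument just above the lemma, I fix $\theta\in(0,1)$ and take the companion radius $\theta'=\theta(3-\theta)/2\in(\theta,1)$, for which the key algebraic comparisons
\begin{equation*}
\theta'-\theta=\tfrac{1}{2}\theta(1-\theta),\qquad \tfrac{1}{2}\theta(1-\theta)\le\theta'(1-\theta')\le\tfrac{3}{2}\theta(1-\theta)
\end{equation*}
hold, so that the weight $\theta(1-\theta)r$ is comparable to the spacing $\theta'-\theta$ and to $\theta'(1-\theta')$.

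The main step is a local Ehrling-type interpolation on the concentric pair $\B_{\theta r}\subset\B_{\theta' r}$ in the weighted Morrey norm: for every $\delta>0$,
\begin{equation*}
\|Du\|_{p,\varphi,w;\B_{\theta r}}\le \delta\,(\theta'-\theta)r\,\|D^2u\|_{p,\varphi,w;\B_{\theta' r}}+\frac{C}{\delta\,(\theta'-\theta)r}\,\|u\|_{p,\varphi,w;\B_{\theta' r}},
\end{equation*}
with $C$ independent of $r,\theta,\theta',\delta$ and $u$. Granted this, substituting $\theta'-\theta=\theta(1-\theta)/2$, multiplying by $[\theta(1-\theta)r]$, and using $[\theta(1-\theta)r]^2\le 4[\theta'(1-\theta')r]^2$ in the $D^2$-term together with $\|u\|_{p,\varphi,w;\B_{\theta' r}}\le\Theta_0$ in the zero-th order term gives
\begin{equation*}
[\theta(1-\theta)r]\|Du\|_{p,\varphi,w;\B_{\theta r}}\le 2\delta\,\Theta_2+\frac{2C}{\delta}\,\Theta_0.
\end{equation*}
Taking $\sup_{\theta\in(0,1)}$ on the left and renaming $\delta=\varepsilon/2$ then produces $\Theta_1\le\varepsilon\Theta_2+(4C/\varepsilon)\Theta_0$ for every $\varepsilon\in(0,2)$, which is the stated inequality (the restriction $\varepsilon<2$ arising because the absorption step below requires $\delta<1$).

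To prove the local interpolation I would employ a standard cut-off argument: choose $\psi\in C_0^\infty(\B_{\theta' r})$ with $\psi\equiv 1$ on $\B_{\theta r}$ and $|D^k\psi|\le C[(\theta'-\theta)r]^{-k}$ for $k=1,2$, apply to $v=\psi u$ the compactly supported Gagliardo--Nirenberg interpolation
\begin{equation*}
\|Dv\|_{p,\varphi,w;\R^n}\le \tau\|D^2v\|_{p,\varphi,w;\R^n}+\frac{C}{\tau}\|v\|_{p,\varphi,w;\R^n}\qquad(\tau>0),
\end{equation*}
and expand by the Leibniz rule. The cross-term carrying $|D\psi|\cdot|Du|$ is absorbed by choosing $\tau$ of order $(\theta'-\theta)r$ and re-parametrising; the remaining terms carry the factors $[(\theta'-\theta)r]^{-1}$ and $[(\theta'-\theta)r]^{-2}$ dictated by the estimates on $D\psi$ and $D^2\psi$, producing the displayed local inequality.

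The main obstacle is the compactly supported Gagliardo--Nirenberg interpolation in $M_{p,\varphi}(w)$, i.e.\ the transfer of the classical $L_{p,w}$ version (available for $w\in A_p$) to the Morrey norm. This transfer rests on the continuity in $M_{p,\varphi}(w)$ of the Riesz transforms, equivalently of the singular kernels $\KK_{ij}$ that represent $D_{ij}v$ in terms of $\Delta v$, which is precisely what Theorem~\ref{CZcont} supplies under the structural hypothesis \eqref{weight} on $\varphi$ together with $w\in A_p$.
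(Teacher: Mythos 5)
Your overall reduction (bound each $[\theta(1-\theta)r]\|Du\|_{p,\varphi,w;\B_{\theta r}}$ uniformly in $\theta$, then take the supremum, introducing the free parameter by choosing $\delta$ proportional to $\theta(1-\theta)r$) is the same as the paper's, and your arithmetic with $\theta'=\theta(3-\theta)/2$ is correct. The problem is the ingredient you yourself flag as ``the main obstacle'': the $\tau$-form Gagliardo--Nirenberg inequality
$$
\|Dv\|_{p,\varphi,w}\le \tau\|D^2v\|_{p,\varphi,w}+\frac{C}{\tau}\|v\|_{p,\varphi,w}
$$
for compactly supported $v$ is not established, and the justification you offer does not deliver it. Theorem~\ref{CZcont} gives boundedness of Calder\'on--Zygmund operators in $M_{p,\varphi}(w)$, which yields the second-order estimate $\|D^2v\|\le C\|\Delta v\|$ (the kernels $\KK_{ij}$ represent second derivatives in terms of the Laplacian); it says nothing about the first-order term, which would be a Riesz potential of order one rather than a singular integral. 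Moreover, the standard trick for inserting the free parameter $\tau$ into a Gagliardo--Nirenberg inequality is dilation, and that is unavailable here: the norm of $M_{p,\varphi}(w)$ is not homogeneous under scaling for a general weight $w\in A_p$ and a general $\varphi$ satisfying \eqref{weight}. So as written your proof rests on an unproved and nontrivial lemma.

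The paper avoids this entirely and is considerably more elementary: it invokes Kalamajska's weighted interpolation inequality \cite{Ka} on a \emph{single} ball in the Lebesgue scale,
$$
\|Du\|_{p,w;\B_r}\leq C\left(\|u\|_{p,w;\B_r}+\| u\|^{\frac12}_{p,w;\B_r}\|D^2u\|^{\frac12}_{p,w;\B_r}\right),
$$
converts it to the additive $\delta$-form by Young's inequality, and then passes to the $M_{p,\varphi}(w)$ norm by dividing by $\varphi(x,r)w(\B_r)^{\frac1p}$ and taking suprema ball by ball, obtaining \eqref{interp} on $\B_{\theta_0 r}$ itself. No cut-off function, no concentric pair $\B_{\theta r}\subset\B_{\theta'r}$, and no whole-space inequality are needed; the parameter $\varepsilon$ enters only at the level of the seminorms $\Theta_s$ via the choice $\delta=\frac{\varepsilon}{2}[\theta_0(1-\theta_0)r]$. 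If you want to salvage your route, you would have to prove the local Ehrling inequality directly in $M_{p,\varphi}(w)$ -- and the natural way to do that is precisely to go through the one-ball weighted Lebesgue inequality of \cite{Ka}, at which point the cut-off machinery becomes superfluous.
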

\begin{proof}
For functions  $u\in W^2_{p,w}(\B_r),$ $p\in(1,\infty)$ and $w\in A_p$ we dispose with the following interpolation inequality proved in \cite{Ka}
$$
\|Du\|_{p,w;\B_r}\leq C\left(\|u\|_{p,w;\B_r}+\| u\|^{\frac12}_{p,w;\B_r}\|D^2u\|^{\frac12}_{p,w;\B_r}  \right)\,.
$$
Then for any $\epsilon>0$ we have
$$
\|Du\|_{p,w;\B_r}\leq C\left(\Big( 1+\frac1{2\epsilon} \Big)\|u\|_{p,w;\B_r}+\frac{\epsilon}{2}\|D^2u\|_{p,w;\B_r}  \right)\,.
$$
Choosing $\epsilon$ small enough, such that  $\delta=\frac{C\epsilon}{2}<1,$   dividing all terms of $\varphi(x,r)w(\B_r)^{\frac1p}$ and taking the supremum over $\B_r$ we get the desired interpolation inequality in $M_{p,\varphi}(w)$
\begin{equation}\label{interp}
\|D u\|_{p,\varphi,w; \B_r}\leq \delta\|D^2 u\|_{p,\varphi,w;\B_r} +\frac{C}{\delta}\|u\|_{p,\varphi,w;\B_r}\,.
\end{equation}
We can always find some  $\theta_0\in(0,1)$ such that
\begin{align*}
\Theta_1\leq &\  2[\theta_0(1-\theta_0)r] \|Du\|_{p,\varphi,w;\B_{\theta_0r}}
\\
\leq &\  2[\theta_0(1-\theta_0)r]\left( \delta \|D^2 u\|_{p,\varphi,w;\B_{\theta_0r}} +
\frac{C}{\delta}\|u\|_{p,\varphi,w;\B_{\theta_0r}}  \right)\,.
\end{align*}
The assertion follows choosing $\delta =\frac\varepsilon2[\theta_0(1-\theta_0)r]<\theta_0r$ for any $\varepsilon\in(0,2).$
\end{proof}
Interpolating $\Theta_1$ in \eqref{theta} and taking $\theta=\frac12$ as in \cite{GulSoft1} we get the Caccioppoli-type estimate
$$
\|D^2u\|_{p,\varphi,w;\B_{r/2}}\leq C\big( \|L u\|_{p,\varphi,w; \B_r}+\frac1{r^2}\|u\|_{p,\varphi,w;\B_r}  \big)\,.
$$
Further, proceeding as in \cite{GulSoft1} and making use of  \eqref{Lu-est} and  \eqref{interp}
 we get the following interior a priori estimate.  
\begin{theorem}[Interior estimate]  \label{th-interior}
Let $u\in W^{2,{\rm loc}}_{p,w}(\Omega)$ and $L$  be a linear  elliptic operator verifying $H_1)$ and $H_2)$ such that  $L u\in M^{\rm loc}_{p,\varphi}(\Omega,w) $ with $p\in (1,\infty)$,
 $w \in A_{p}$ and $\varphi$ satisfying \eqref{weight}.  Then  $D_{ij}u\in L_{p,\varphi}(\Omega',w)$ for any $\Omega'\subset\subset\Omega''\subset\subset\Omega$  and
\begin{equation}\label{upl}
\|D^2 u\|_{p,\varphi,w;\Omega'}\leq C\big( \|u\|_{p,\varphi,w;\Omega''} + \| L u\|_{p,\varphi,w;\Omega''}\big)
\end{equation}
 where the  constant depends  on known quantities and ${\rm dist}\, (\Omega',\partial\Omega'').$
\end{theorem}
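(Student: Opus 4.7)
The plan is to promote the Caccioppoli--type estimate
$$
\|D^2u\|_{p,\varphi,w;\B_{r/2}}\leq C\Big(\|Lu\|_{p,\varphi,w;\B_r}+\frac{1}{r^2}\|u\|_{p,\varphi,w;\B_r}\Big),
$$
already derived just above the theorem on every ball $\B_r\subset\Omega$ of radius small enough that Corollaries~\ref{VBN} and~\ref{localnonsing} produce an absorption constant $C\varepsilon<1$ in \eqref{D2est}, to the claimed global estimate on $\Omega'$ by a standard finite--covering argument.

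First I would fix $\Omega'\subset\subset\Omega''\subset\subset\Omega$ and set $d={\rm dist}(\Omega',\partial\Omega'')$. Choose $\varepsilon>0$ small enough that the absorption producing \eqref{D2est} is legitimate, set $\rho_0=\rho_0(\varepsilon,\gamma_{\ba})$ from Corollary~\ref{VBN}, and pick $r_0\in(0,\min(d,\rho_0))$. By compactness, $\overline{\Omega'}$ is covered by finitely many balls $\B_{r_0/2}(x_k)$, $k=1,\dots,N$, with $x_k\in\Omega'$; the choice $r_0<d$ guarantees $\B_{r_0}(x_k)\subset\Omega''$ for every $k$. Applying the Caccioppoli--type estimate at each $x_k$ and invoking monotonicity of the Morrey norm yields
$$
\|D^2u\|_{p,\varphi,w;\B_{r_0/2}(x_k)}\leq C(r_0)\Big(\|Lu\|_{p,\varphi,w;\Omega''}+\|u\|_{p,\varphi,w;\Omega''}\Big),\qquad k=1,\dots,N.
$$
The density of $C_0^\infty$ in the weighted Sobolev space (Remark~\ref{rem1}), already used to justify \eqref{IRF} for $u\in W^2_{p,w}$, removes any regularity obstacle in applying the local estimate to the given $u$.

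The remaining and \emph{main} technical point is gluing these $N$ local Morrey bounds into a single bound for $\|D^2u\|_{p,\varphi,w;\Omega'}$. For arbitrary $y\in\Omega'$ and $\rho>0$, subadditivity of the weighted $L_p$ norm over the covering gives
$$
\|D^2u\|_{p,w;\B_\rho(y)\cap\Omega'}^p\leq\sum_{k=1}^N\|D^2u\|_{p,w;\B_{r_0/2}(x_k)}^p\leq\sum_{k=1}^N\Big(\varphi(x_k,r_0/2)w(\B_{r_0/2}(x_k))^{\frac1p}\|D^2u\|_{p,\varphi,w;\B_{r_0/2}(x_k)}\Big)^p.
$$
I then split cases on $\rho$. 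For $\rho\leq r_0/2$ the point $y$ lies in some $\B_{r_0/2}(x_k)$ and the Morrey quotient $\varphi(y,\rho)^{-1}w(\B_\rho(y))^{-\frac1p}\|D^2u\|_{p,w;\B_\rho(y)\cap\Omega'}$ is dominated by the local Morrey norm at $x_k$ via doubling of $w\in A_p$ together with the supremal condition \eqref{weight} on $\varphi$, which lets me compare $\varphi(y,\rho)w(\B_\rho(y))^{\frac1p}$ with $\varphi(x_k,r_0/2)w(\B_{r_0/2}(x_k))^{\frac1p}$ up to an absolute constant. For $\rho>r_0/2$ the scale is bounded below, only finitely many centers contribute, and the ratio $\varphi(x_k,r_0/2)w(\B_{r_0/2}(x_k))^{\frac1p}\big[\varphi(y,\rho)w(\B_\rho(y)\cap\Omega')^{\frac1p}\big]^{-1}$ is absorbed into a constant depending on $r_0$, hence ultimately on ${\rm dist}(\Omega',\partial\Omega'')$. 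Passing to the supremum over $(y,\rho)\in\Omega'\times(0,\infty)$ produces \eqref{upl} and shows $D^2u\in M_{p,\varphi}(\Omega',w)$, with $C$ depending on $n,p,\Lambda,[w]_{A_p},\varphi,\gamma_{\ba},\|\ba\|_{\infty},\|b^i\|_{\infty},\|c\|_{\infty}$ and ${\rm dist}(\Omega',\partial\Omega'')$ only.
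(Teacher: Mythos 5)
Your overall strategy --- the Caccioppoli-type estimate on balls of radius $r_0<\min(d,\rho_0)$ followed by a finite covering of $\overline{\Omega'}$ --- is exactly what the paper intends (it compresses this step into ``proceeding as in \cite{GulSoft1} and making use of \eqref{Lu-est} and \eqref{interp}''), so the skeleton is right. The problem sits in the step you yourself single out as the main technical point. For $\rho\le r_0/2$ you claim that doubling of $w$ together with the supremal condition \eqref{weight} lets you compare $\varphi(y,\rho)w(\B_\rho(y))^{1/p}$ with $\varphi(x_k,r_0/2)w(\B_{r_0/2}(x_k))^{1/p}$ ``up to an absolute constant''. Condition \eqref{weight} gives no such comparison: it constrains only the radial behaviour of $\varphi$ at a \emph{fixed} centre, and only through the essential \emph{infimum} $\es_{t<s<\infty}\varphi(x,s)w(\B_s(x))^{1/p}$; it imposes no regularity of $\varphi$ in the spatial variable and no two-sided control in $r$. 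An admissible weight may oscillate arbitrarily between the centres $y$ and $x_k$ and between the scales $\rho$ and $r_0/2$, so the constant you absorb need not be finite. The analogous objection applies to your case $\rho>r_0/2$, where you need a positive lower bound for $\varphi(y,\rho)w(\B_\rho(y))^{1/p}$ uniform in $y\in\Omega'$ and $\rho>r_0/2$, which \eqref{weight} does not supply.

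The small-$\rho$ case can be repaired without comparing $\varphi$ at different points: cover $\overline{\Omega'}$ by the \emph{smaller} balls $\B_{r_0/4}(x_k)$ while applying the Caccioppoli estimate on $\B_{r_0/2}(x_k)\subset\Omega''$. If $y\in\B_{r_0/4}(x_k)$ and $\rho\le r_0/4$, then $\B_\rho(y)\subset\B_{r_0/2}(x_k)$, and since the norm $\|\cdot\|_{p,\varphi,w;\B_{r_0/2}(x_k)}$ of Definition~\ref{def1} is already a supremum over \emph{all} centres lying in $\B_{r_0/2}(x_k)$ and all radii, the quotient $\varphi(y,\rho)^{-1}w(\B_\rho(y))^{-1/p}\|D^2u\|_{p,w;\B_\rho(y)\cap\Omega'}$ is bounded by $\|D^2u\|_{p,\varphi,w;\B_{r_0/2}(x_k)}$ directly, with constant $1$ and no change of centre or scale. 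For $\rho>r_0/4$ your summation over the cover is the right move, but the absorption of $\bigl(\sum_k\varphi(x_k,r_0/2)^pw(\B_{r_0/2}(x_k))\bigr)^{1/p}\bigl[\varphi(y,\rho)w(\B_\rho(y))^{1/p}\bigr]^{-1}$ into a constant should be recorded as an additional (mild but genuine) assumption on $\varphi$, for instance $\inf_{x\in\Omega,\,r\ge r_0/4}\varphi(x,r)w(\B_r(x))^{1/p}>0$, rather than attributed to \eqref{weight}; the paper tacitly inherits the same assumption from \cite{GulSoft1}.
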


Let $x^0=(x',0)$ and denote by $C^\gamma$ the space of functions $u\in  C_0^\infty(\B_r(x^0))$ with $u=0$ for $x_n\leq 0.$ The space $W^{2,\gamma}_{p,w}(\B_r(x^0))$ is the closure of $C^\gamma$ with respect to the norm of $W^2_{p,w}. $   Then
for any  $v\in W_{p,w}^{2,\gamma}(\B_r^+(x^0))$  the next   representation formula  holds (see \cite{ChFraL2})
\begin{align}\label{bdrep}
\nonumber
D_{ij}v(x)=& \KK_{ij}\L v(x) +\CC_{ij}[a^{hk}D_{hk}v](x)\\
\nonumber
+&  {\L} v(x)\int_{{\SS}^{n-1}}\Gamma_j(x,y)y_i d\sigma_y +I_{ij}(x)\quad \forall\ i,j=1,\ldots,n,
\end{align}
where we have set
\begin{align*}
I_{ij}(x)=&\  \wtl\KK_{ij} \L v(x)  +\wtl\CC_{ij}[a^{hk},D_{hk} v](x),\qquad    \forall  \   i,j=1,\ldots,n-1,\\
I_{in}(x)=&\   I_{ni}(x)= \wtl\KK_{il}(D_n{\T}(x))^l \L v(x) +\CC_{il}[a^{hk},D_{hk}v](x) (D_n{\T}(x))^l  \\
&\qquad\qquad\qquad \forall\  i=1,\ldots, n-1,\\
 I_{nn}(x)=&\     \wtl\KK_{ls}(D_n{\T}(x))^l (D_n{\T}(x))^s \L v(x)\\
 &\qquad    + 
 \wtl\CC_{ls}[a^{hk},D_{hk}v(x)](D_n{\T}(x))^l (D_n{\T}(x))^s
\end{align*}
where
$$
D_n{\T}(x)=\left( (D_n{\T}(x))^1,\ldots,(D_n{\T}(x))^n \right) ={\T}(e_n,x).
$$
Applying the  estimates \eqref{KC}  and \eqref{tlKl},     the interpolation inequality   \eqref{interp} and  taking into account the $VMO$ properties of the coefficients $a^{ij}$'s,
it is possible to choose  $r_0$ small enough such that 
\begin{equation}\label{bdrest2}
\|D_{ij}v\|_{p,\varphi;w,{\B}_r^+}\leq C( \|L v\|_{p,\varphi; w,{\B}_r^+}+  \|u\|_{p,\varphi; w,{\B}_r^+})
\end{equation}
for all $r<r_0$ (see \cite{GulSoft1} for details).
 By local flattering of the boundary, covering with semi-balls, taking a   partition of  unity  subordinated  to that covering and applying the   estimate  \eqref{bdrest2} we get a boundary a priori estimate that unified with
\eqref{upl}  gives the next  theorem.
\begin{theorem}[Main result]\label{mine}
Let $u\in  W^2_{p,\varphi}(\Omega,w)\cap \overset{\circ}{W}{}^1_p(\Omega,w)   $     be a solution of \eqref{DP} under the  conditions $H_1)$ and $H_2).$ Then  the next estimate holds   for any  $w\in A_p,$ $p\in(1,\infty)$ and $\varphi$ satisfying \eqref{weight}
\begin{equation}\label{est}
\|D^2u\|_{p,\varphi,w;\Omega}\leq C\big(\|u\|_{p,\varphi,w;\Omega} +  \| f\|_{p,\varphi,w; \Omega}\big)
\end{equation}
and the constant $C$ depends on known quantities only.
\end{theorem}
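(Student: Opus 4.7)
The plan is to glue the interior estimate \eqref{upl} of Theorem~\ref{th-interior} with the local boundary estimate \eqref{bdrest2} through a standard boundary-flattening and partition-of-unity argument, and then to dispose of the first- and zeroth-order terms by the interpolation inequality \eqref{interp}.

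Since $\partial\Omega\in C^{1,1},$ for every $x^0\in\partial\Omega$ I would choose a $C^{1,1}$-diffeomorphism $\Psi_{x^0}$ which straightens $\partial\Omega$ in a neighbourhood $U_{x^0}$ of $x^0,$ mapping $U_{x^0}\cap\Omega$ onto a half-ball $\B^+_{r_0}.$ Under such a change of variables the transformed operator $\widetilde L$ still satisfies $H_1)$ and $H_2)$: ellipticity is preserved by the chain rule, the pull-back of $\ba$ remains in $VMO$ up to a controlled change of the modulus $\gamma_{\ba},$ the lower-order coefficients stay bounded, and $f$ stays in $M_{p,\varphi}(w).$ Because $u\in\overset{\circ}{W}{}^1_p(\Omega,w)$ has vanishing trace on $\partial\Omega,$ its pull-back lies in the class $W^{2,\gamma}_{p,w}(\B^+_{r_0}),$ so the boundary representation formula and the local bound \eqref{bdrest2} apply.

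Next I fix $r_0\in(0,\rho_0),$ with $\rho_0$ small enough that Corollary~\ref{VBN} and Corollary~\ref{localnonsing} supply an $\varepsilon$ which lets the commutator terms produced by \eqref{bdrep} be absorbed on the left side, yielding \eqref{bdrest2} uniformly in every boundary chart. I cover $\overline{\Omega}$ by finitely many interior balls $\B_{r_0/2}(x_k)$ with $\B_{r_0}(x_k)\subset\subset\Omega$ together with finitely many boundary patches $\Psi^{-1}_{x^j}(\B^+_{r_0/2}),$ pick a smooth partition of unity $\{\eta_k\}$ subordinate to that cover, and apply \eqref{upl} to $\eta_k u$ on interior patches and \eqref{bdrest2} to its pull-back on boundary patches. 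Since $L(\eta_k u)=\eta_k Lu+a^{ij}(2 D_i\eta_k\, D_j u+D_{ij}\eta_k\, u)+b^i D_i\eta_k\, u,$ the extra terms are of order one or zero in $u.$ Summing over $k$ and using $\sum_k\eta_k\equiv 1$ on $\Omega,$ I obtain
$$
\|D^2 u\|_{p,\varphi,w;\Omega}\leq C\bigl(\|Lu\|_{p,\varphi,w;\Omega}+\|Du\|_{p,\varphi,w;\Omega}+\|u\|_{p,\varphi,w;\Omega}\bigr).
$$

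To finish, $Lu=f-b^iD_iu-cu$ with $b^i,c\in L_\infty(\Omega)$ gives $\|Lu\|_{p,\varphi,w;\Omega}\leq\|f\|_{p,\varphi,w;\Omega}+C\bigl(\|Du\|_{p,\varphi,w;\Omega}+\|u\|_{p,\varphi,w;\Omega}\bigr),$ and the interpolation inequality \eqref{interp} applied globally delivers $\|Du\|_{p,\varphi,w;\Omega}\leq\delta\|D^2u\|_{p,\varphi,w;\Omega}+C_\delta\|u\|_{p,\varphi,w;\Omega}.$ Choosing $\delta$ small enough absorbs the $\|D^2u\|$-term into the left-hand side and produces \eqref{est}. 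The main obstacle is the invariance check under the $C^{1,1}$ flattening: one must verify that the $VMO$-modulus of the leading coefficients and the Muckenhoupt constant $[w]_{A_p}$ only deteriorate by factors depending on $\|\Psi\|_{C^{1,1}},$ so that the radius $\rho_0$ selected via Corollaries~\ref{VBN} and \ref{localnonsing} controls the commutator contributions uniformly over all boundary charts; once this is secured, the argument is merely the assembly of ingredients already in place.
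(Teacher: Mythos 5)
Your proposal follows essentially the same route as the paper: it assembles the interior estimate \eqref{upl} and the local boundary estimate \eqref{bdrest2} by boundary flattening, a covering by balls and semi-balls, and a subordinate partition of unity, and then removes the lower-order terms via \eqref{Lu-est} and the weighted interpolation inequality \eqref{interp}. The paper leaves these steps as a citation to the classical scheme of Chiarenza--Frasca--Longo, whereas you spell them out (including the invariance of $H_1)$, $H_2)$ and of $[w]_{A_p}$ under the $C^{1,1}$ change of variables), which is a faithful elaboration rather than a different argument.
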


Let us note that  the solution  of \eqref{DP} exists according to Remark~\ref{rem1}. The a priori estimate follows  as in
 \cite{ChFraL1, ChFraL2} making use of \eqref{Lu-est}  and the interpolation inequality in weighted Lebesgue spaces \cite{Ka}.

\subsection*{Acknowledgments}
The research of V. Guliyev and M. Omarova is  partially supported by the grant of
Science Development Foundation under the President of the Republic of Azerbaijan, project EIF-2013-9(15)-FT.
The research of V. Guliyev is  partially supported by the grant of Ahi Evran University Scientific Research Projects (PYO.FEN.4003-2.13.007).

L. Softova is a member of GNAMPA-INDAM. The present work has  prepared during the visit of the third author at the Ahi Evran University for which she expresses her gratitude at the staff of the Department of Mathematics for the kind hospitality.

\end{document}